	\crefname{equation}{equation}{equations}
\newsavebox{\mytable}
	\definecolor{lightyellow}{rgb}{1,1,0.878}
	\definecolor{orange}{rgb}{1,0.647,0}
\definecolor{linkred}{rgb}{0.75,0,0}
\definecolor{linkblue}{rgb}{0,0,0.75} 
\definecolor{darkblue}{RGB}{0, 0, 139}
\definecolor{darkred}{RGB}{139,0,0}
\definecolor{lightblue}{RGB}{179, 209, 255}
\definecolor{lightred}{rgb}{1,0.85,0.85}
\tikzstyle{ipe stylesheet} = [
\numberwithin{equation}{section}
\newcommand{\dd}{\mathrm{d}}
\DeclareMathOperator{\Aut}{Aut}
\DeclareMathOperator*{\Res}{Res}
\newcommand{\beq}{\begin{equation}}
\newcommand{\eeq}{\end{equation}}
\newcommand{\bea}{\begin{eqnarray}}
\newcommand{\eea}{\end{eqnarray}}
\theoremstyle{definition}
\newtheorem{thm}{Theorem}[section]
\newtheorem{defn}[thm]{Definition}
\newtheorem{rem}[thm]{Remark}
\newtheorem{lem}[thm]{Lemma}
\newtheorem{cor}[thm]{Corollary}
\newtheorem*{ex}{Example}
\newtheorem*{que*}{\textcolor{BrickRed}{Question}}
\def\bd{\begin{defn}}
\def\ed{\end{defn}}
\def\br{\begin{rem}}
\def\er{\end{rem}}
\def\bex{\begin{ex}}
\def\eex{\end{ex}}
\definecolor{webbrown}{rgb}{0.65, 0.16, 0.16}
\newcommand{\ben}{\begin{eqnarray*}}
\newcommand{\een}{\end{eqnarray*}}
\newcommand{\be}{\begin{equation}}
\newcommand{\ee}{\end{equation}}
\renewenvironment{abstract}{%
    \if@twocolumn
      \section*{\abstractname}%
    \else \normalsize %% <- here I've removed \small
      \begin{center}%
        {\bfseries \normalsize\abstractname\vspace{\z@}}%  %% <- here I've added \Large
      \end{center}%
      \quotation
    \fi}
    {\if@twocolumn\else\endquotation\fi}
\title{\textsc{Topological recursion for fully simple maps from ciliated maps}}
\date{\vspace{-5ex}}
\author{Ga\"etan Borot\footnote{Institut f\"ur Mathematik \& Institut f\"ur Physik, Humboldt-Universit\"at zu Berlin, Rudower Chaussee 25, 12489 Berlin, Germany.}\,\,, S\'{e}verin Charbonnier\footnote{Max Planck Institut f\"ur Mathematik, Vivatsgasse 7, 53111 Bonn, Germany.}\,\,, Elba Garcia-Failde\footnote{Universit\'{e} de Paris, B\^{a}timent Sophie Germain, 8 Place Aur\'{e}lie Nemours, 75205 Paris Cedex 13, France.}}
\begin{document}

\maketitle

\vspace{2cm}

\begin{abstract}
Ordinary maps satisfy topological recursion for a certain spectral curve $(x,y)$. We solve a conjecture from \cite{BG-F18} that claims that fully simple maps, which are maps with non self-intersecting disjoint boundaries, satisfy topological recursion for the exchanged spectral curve $(y,x)$, making use of the topological recursion for ciliated maps \cite{BCEG-F}.
\end{abstract}

\thispagestyle{empty}
\bigskip

\tableofcontents

\newpage
%***************************************
\section*{Acknowledgements}
%***************************************
The authors are deeply grateful to Rapha\"el Belliard, Norman Do, Bertrand Eynard, Danilo Lewa\'nski and Ellena Moskovsky for the numerous discussions and computations on fully simple maps.

\smallskip

While finalising this work, the authors learned that B.~Bychkov, P.~Dunin-Barkowski, M.~Kazarian and S.~Shadrin had found an alternative proof for the fully simple conjecture. We are grateful for this communication and looking forward to exploring their work.

\smallskip

S.C.~was supported by the Max-Planck-Gesellschaft, and currently by the CAF of Paris. S.C.~wants to thank Liselotte Charbonnier and Lucie Neirac for material support. E.G.-F.~was supported by the public grant ``Jacques Hadamard'' as part of the Investissement d'avenir project, reference ANR-11-LABX-0056-LMH, LabEx LMH and currently receives funding from  the  European  Research Council  (ERC)  under  the  European  Union's Horizon  2020 research and  innovation  programme  (grant  agreement  No.~ERC-2016-STG 716083  ``CombiTop''). She is also grateful to the Institut des Hautes \'{E}tudes Scientifiques (IHES) for its hospitality.

\medskip

\newpage 

\section{Introduction: maps and fully simple maps}

A {\em map} $M$ of genus $g$ is the proper embedding of a finite connected graph into an oriented, topological, compact surface of genus $g$, so that the complement of the graph is a disjoint union of topological discs (called {\em faces}). We say that a face $f$ \emph{surrounds} a vertex $v$ (or an edge $e$) when $v$ (or $e$) belongs to the topological closure of $f$. Define an {\em oriented edge} to be an edge along with a choice of one of its two orientations. We say that an oriented edge is {\em adjacent} to a face if the face lies on its left and {\em incident} to a vertex if it points to this vertex. Maps may be endowed with the extra structure of an ordered tuple of distinct oriented edges, such that no two are adjacent to the same face. We refer to these oriented edges as {\em roots}, to their adjacent faces as {\em boundary faces}, and to all remaining faces as {\em internal faces}. The number of oriented edges adjacent to a face (resp. incident to a vertex) is called the {\em degree} of the face (resp. of the vertex). We denote by $\partial M$ the disjoint union of boundary faces and by $\partial_1 M, \ldots, \partial_n M$ the boundary faces ordered as their roots are. We say a map of genus $g$ with $n$ boundary faces has {\em topology $(g,n)$}. We say that the map is closed if it does not have boundary faces, \textit{i.e.} $n = 0$. Throughout the article, we will keep $g\geq 0$ and $n\geq 1$ unless stated otherwise.

\medskip

Two maps are equivalent if there exists an orientation-preserving homeomorphism between their underlying surfaces such that the vertices, oriented edges and faces of the first map are carried bijectively to the vertices, oriented edges and faces of the second, preserving the graph structure and the ordered tuple of roots. This also gives the notion of automorphism of a map, which is a permutation of the oriented edges arising from an orientation-pre\-ser\-ving ho\-meo\-morphism from the underlying surface to itself that preserves the graph structure and the tuple of roots\footnote{Note that for maps of topology $(g,n)$ with $n > 0$, we have ${\rm Aut}(M) = \{{\rm Id}\}$. Automorphisms therefore will not play a role in this article, although for conceptual reasons we prefer to keep $\#{\rm Aut}(M)$ in the formula for weights.}.

\medskip

$\mathbf{M}_{g,n}$ will denote the set of maps of genus $g$ with $n$ boundary faces and $\mathbf{M}_{g,n}(v)$ its subset of maps having $v$ vertices. The definition of a map allows different boundary faces to be adjacent along vertices and edges, as well as for a boundary face to be adjacent to itself along vertices and edges. Informally, we call a map fully simple if such behaviour does not arise --- a precise definition follows.

\begin{defn} \label{def:fullysimple}
An oriented edge in a map is a {\em boundary edge} if it is adjacent to a boundary face. A map is {\em fully simple} if at each vertex, at most one boundary edge is incident. \hfill $\star$
\end{defn}

In previous work, the term {\em simple} has been used to refer to maps in which for each $i \in \{1,\ldots,n\}$, at each vertex, at most one boundary edge adjacent to $\partial_iM$ is incident \cite{BG-F18}. We stress that, unlike in fully simple maps, boundary edges adjacent to distinct boundary faces can be incident to the same vertex. Throughout the article, we use the term {\em ordinary} to refer to the class of all maps, so as to emphasise the distinction from the class of fully simple maps. We will be interested primarily in the following enumerations of (equivalence classes of) ordinary and fully simple maps.

\begin{defn} \label{def:mapenumeration} 
Let $r\geq 2$ be an integer fixed throughout the article. For positive integers $k_1, k_2, \ldots, k_n$,  let
\[
\mathrm{Map}_{g;(k_1,\ldots,k_n)} \coloneqq \sum_{\substack{M\in\mathbf{M}_{g,n} \\ {\rm deg}(\partial_iM) = k_i}} \mathscr{W}(M)
\]
be the weighted enumeration of maps $M$ with $n$ boundary faces, such that the degree of the $i^{{\rm th}}$ boundary face is $k_i$ for $i \in \{1, 2, \ldots, n\}$. The weight of a map is given by 
\begin{equation}
\label{def:Wscr}\mathscr{W}(M) \coloneqq \frac{\alpha^{2-2g-\#\mathcal{V}(M)}}{\# \mathrm{Aut}(M)} \,  t_3^{f_3(M)} t_4^{f_4(M)} \cdots t_{r+1}^{f_{r+1}(M)}.
\end{equation}
Here, $f_k(M)$ is the number of internal faces of degree $k$, $\mathcal{V}(M)$ is the set of vertices, and $\mathrm{Aut}(M)$ the group of automorphisms. The analogous weighted enumeration restricted to the set of fully simple maps is denoted
\[
\mathrm{FSMap}_{g;(k_1,\ldots,k_n)} \coloneqq \sum_{\substack{M\in\mathbf{M}_{g,n} \\ {\rm fully}\,\,{\rm simple} \\ {\rm deg}(\partial_iM) = k_i}} \mathscr{W}(M).
\]
\hfill $\star$
\end{defn}

$\mathrm{Map}_{g;(k_1,\ldots,k_n)}$ and $\mathrm{FSMap}_{g;(k_1,\ldots,k_n)}$ are well-defined elements of $\alpha\mathbb{Z}\left[t_3,\ldots, t_{r+1}\right][[\alpha^{-1}]]$. For brevity, our notation makes implicit the dependence on the parameters $\alpha, t_3, \ldots, t_{r+1}$. From these formal power series, one can extract the number of maps with prescribed boundary face degrees, internal face degrees, and Euler characteristic. We adopt the convention that $\mathbf{M}_{0,1}(1)$ contains only the map consisting of a single vertex and no edges; it is the map of genus $0$ with $1$ boundary of length $0$, that is $\mathrm{Map}_{0;(0)}=\alpha$. Apart from this degenerate case, we always consider that boundaries have length $\geq 1$. For closed maps, there is no point in distinguishing ordinary and fully simple, and we denote ${\rm Map}_{g,\emptyset}$ the corresponding generating series.

\bd[\emph{Generating series of ordinary and fully simple maps}]\label{def:gen:fun:ord:fs}
Summing over all possible lengths, we define the generating series of maps of topology $(g,n)$ as follows:
\[
W_{g,n}(x_1,\ldots, x_n)\coloneqq \sum_{k_1,\ldots,k_n \geq 0} \frac{\mathrm{Map}_{g;(k_1,\ldots,k_n)}}{x_1^{1+k_1}\cdots x_n^{1+k_n}}.
\]
We have that $W_{g,n}(x_1,\ldots, x_n)\in \alpha\mathbb{Z}[x_1^{-1},\ldots,x_n^{-1},t_3,\ldots,t_{r+1}][[\alpha^{-1}]]$. We also introduce the generating series for fully simple maps of topology $(g,n)$:
\[
X_{g,n}(w_1,\ldots,w_n) \coloneqq \sum_{k_1,\ldots,k_n \geq 0} \mathrm{FSMap}_{g;(k_1,\ldots,k_n)} w_1^{k_1 - 1} \ldots w_n^{k_n - 1},
\]
and we have $X_{g,n}(w_1,\ldots, w_n)\in w_1^{-1}\cdots w_n^{-1}\mathbb{Z}[w_1,\ldots,w_n,t_3,\ldots,t_{r+1}][[\alpha^{-1}]]$. \hfill $\star$
\ed 

Functional relations determining the fully simple and the ordinary generating series for topology $(0,1)$ (discs) and $(0,2)$ (cylinders) have been established in \cite{BG-F18}, and the choice of $x_i^{-1}$ for ordinary maps but $w_i$ for fully simple maps makes the formulas more elegant. In particular, a straightforward adaptation of  \cite[Propositions 3.3. and 4.4]{BG-F18} to include the parameter $\alpha$ yields:
\begin{equation}
\label{01rel}
X_{0,1}(\alpha^{-1}W_{0,1}(x)) = \alpha x,\qquad W_{0,1}(\alpha^{-1}X_{0,1}(w)) = \alpha w
\end{equation}
and
\begin{equation}
\label{02rel}
\bigg(W_{0,2}(x_1,x_2) + \frac{1}{(x_1 - x_2)^2}\bigg)\dd x_1\dd x_2 = \bigg(X_{0,2}(w_1,w_2) + \frac{\alpha^2}{(w_1 - w_2)^2}\bigg)\dd w_1\dd w_2
\end{equation}
provided that $\alpha x_i = X_{0,1}(w_i)$ or equivalently $\alpha w_i = W_{0,1}(x_i)$.

\medskip

\noindent \textbf{Outline.} Our main result is Theorem~\ref{mainprop} establishing topological recursion for the fully simple generating series $X_{g,n}$. This proves the conjecture of \cite{BG-F18}, which was motivated by the transformations for discs and cylinders that can be justified by combinatorial methods and supported by numerical data in genus $1$. The present article builds on the enumerative study of combinatorial objects called \emph{multi-ciliated maps} carried out in \cite{BCEG-F} and reviewed in Section~\ref{S2}. We show in Section~\ref{S3} that multi-ciliated maps are dual to fully simple maps, and use in Section~\ref{S4} analytic techniques to establish the conjecture. This approach also gives a different proof of \eqref{01rel}-\eqref{02rel} (see Theorem~\ref{maincor}), \textit{i.e.} that the fully simple spectral curve (encoding $X_{0,1}$ and $X_{0,2}$) and the ordinary spectral curve (encoding $W_{0,1}$ and $W_{0,2}$) are related by the symplectic transformation "exchange of $x$ and $y$``. We discuss a few consequences (some of them anticipated in \cite{BG-F18} conditionally to the former conjecture) in Section~\ref{S5}.

\section{Multi-ciliated maps}
\label{S2}

In \cite{BCEG-F}, we introduced various types of generalised Kontsevich graphs. Among them, the ciliated and multi-ciliated maps are the types that we shall relate to fully simple maps. We remind the reader about their definitions; we also define their weights and their generating series; last, we present preliminary results coming from \cite{BCEG-F}.

\medskip

Ciliated and multi-ciliated maps are maps with two types of vertices (black and white), which satisfy specific constraints.

\bd[\emph{Constraints on the vertices}]\label{def:const:vertices}
\emph{Black} and \emph{white} vertices have the following properties:
\begin{itemize}
	\item a black vertex $v_{\bullet}$ must have a degree $ \deg(v_{\bullet}) \in \{3,\ldots,r + 1\}$;
	\item a white vertex may have arbitrary positive degree;
	\item \emph{star constraint}: for any given white vertex $v_{\circ}$, the faces surrounding $v_{\circ}$ are pairwise distinct;
	\item \emph{uniqueness constraint}: each face surrounds at most one white vertex.
\hfill $\star$
\end{itemize}
\ed 
Note that a univalent white vertex automatically satisfies the star constraint.

\medskip

\bd[\emph{Ciliated maps}]\label{def:cil:konts}
$M\in\mathbf{C}_{g,n}$ if:
\begin{itemize}
	\item $M$ is a connected map of genus $g$;
	\item $M$ has exactly $n$ white vertices, labelled from $1$ to $n$. They are univalent and the face surrounding the $i^{\textup{th}}$ white vertex is called the $i^{\textup{th}}$ \emph{marked face}. The faces that do not surround a white vertex are called \emph{unmarked faces}.
	\end{itemize}
Every such map is called \emph{ciliated map of type $(g,n)$}. Note that in this case, $\Aut(M)=\{\rm{Id}\}$. 
\hfill $\star$
\ed
The term \emph{ciliated} comes from the fact that each of the $n$ white vertices has degree $1$, \textit{i.e.}~there is only one oriented edge incident to it. This edge attached to a white vertex can be viewed as a cilium in the corresponding marked face.

\medskip

\bd[\emph{Multi-ciliated maps}]\label{def:multicil:konts}
Let $\underline{k} = (k_1,\ldots,k_n)$ be an $n$-tuple of positive integers.  $M\in\mathbf{S}_{g,\underline{k}}$ if:
\begin{itemize}
	\item $M$ is a connected map of genus $g$;
	\item $M$ has $n$ white vertices, labelled from $1$ to $n$. Each of them equipped with a choice of incident edge (the $i^{{\rm th}}$ root). The $i^{\textup{th}}$ white vertex has degree $k_i$ and is equipped with a choice of incident edge (the \emph{$i^{{\rm th}}$ root}). The faces surrounding white vertices are called \emph{marked faces}, while the other are called \emph{unmarked faces}.  The set of unmarked faces is denoted $\mathcal{F}^{\circ}(M)$. 
\end{itemize}
Every such map is called \emph{multi-ciliated map of type $(g,n)$}. Again, $\Aut(M)=\{\rm{Id}\}$. \hfill $\star$
\ed
Comparing Definitions~\ref{def:cil:konts} and \ref{def:multicil:konts}, we have $\mathbf{C}_{g,n} =\mathbf{S}_{g;(1,\ldots,1)}$. 
\bd[\emph{Degree of a map}]
\label{def:degree}We define the \emph{degree} of a map $M$ as:
\[
\deg M \coloneqq \#\mathcal{E}(M)-  \#\mathcal{V}(M) = 2g(M) - 2 + \#\mathcal{F}(M),
\]
where $g(M)$ is the genus of $M$ and $\mathcal{V}(M)$, $\mathcal{E}(M)$ and $\mathcal{F}(M)$ respectively the sets of vertices, edges and faces of the underlying graph. We denote by
$$
\mathbf{C}_{g,n}^{\delta} \subseteq \mathbf{C}_{g,n}  \qquad {\rm and}\qquad \mathbf{S}_{g,\underline{k}}^{\delta} \subseteq  \mathbf{S}_{g,\underline{k}}
$$ 
the corresponding subsets of maps of fixed degree $\delta$.\hfill $\star$
\ed 
An easy Euler counting (see \textit{e.g.}~\cite{BCEG-F}) shows that for a given topology $(g,n)$ and degree $\delta=(2g-2 + \#\mathcal{F})$, the sets $\mathbf{C}_{g,n}^{\delta}$ and $\mathbf{S}_{g,\underline{k}}^{\delta}$ are finite. We now turn to the local weights of a (multi)-ciliated map. 

\bd[\emph{Local weights}]\label{def:potential}
The \emph{potential} of the model is a polynomial of degree $r+1$:
\begin{equation*}
V(u) \coloneqq \frac{u^2}{2}-\sum_{j=3}^{r+1} \frac{t_j}{j}\,u^j.
\end{equation*}
We then introduce the \emph{propagator}:
\begin{equation*}
\mathscr{P}(u_1,u_2)\coloneqq\frac{u_1-u_2}{V'(u_1)-V'(u_2)},
\end{equation*}
and for $d \in \{3,\ldots,r + 1\}$:
\begin{equation*}
\mathscr{V}_d(u_{1},\ldots,u_{d})
\coloneqq \Res_{u = \infty} \frac{V'(u)\dd u}{\prod_{j = 1}^d (u - u_j)} = \sum_{i=1}^d \frac{-\,V'(u_i)}{\prod_{j\neq i} (u_i-u_j)}.
\end{equation*}
It is manifest from the first formula that this weight is a symmetric polynomial in $u_1,\ldots,u_d$. In particular it is well-defined when some of the $u_i$ coincide.
\hfill $\star$
\ed

\medskip

We fix a finite set of complex numbers $\Lambda = \{\lambda_1,\ldots,\lambda_N\}$, considered as parameters. For a given $n$-tuple $\underline{k}$, we denote $Z_i = [z_{i,1},\ldots,z_{i,k_i}]$ a $k_i$-tuple of complex variables, and $\underline{Z} = (Z_1,\ldots,Z_n)$ --- square brackets are used for better parsing. It is now possible to associate a weight to a (multi)-ciliated map, by summing over decorations of unmarked faces and multiplying the local weights:

\bd
\label{def:wcil}The \emph{weight} of a (multi-)ciliated map $M$ is given by:
\begin{equation*}
\mathscr{W}_{\rm cil}(M) =\sum_{U\,:\,\mathcal{F}^{\circ} \rightarrow \Lambda} 
\prod_{\substack{e\in \mathcal{E}(M)\\ e=(f_1,f_2)}} \mathscr{P}(u_{f_1},u_{f_2})
\prod_{\substack{v\in\mathcal{V}(M)\\ \mathrm{black}}}\,\mathscr{V}_{\deg (v)}(\{u_f\}_{f\mapsto v}),
\end{equation*}
Here, $u_f$ is the decoration of a face: for an unmarked face it is $u_f = U(f) \in \Lambda$ ; for marked faces, it is $z_{i,1}$  for the face adjacent to the $i^{{\rm th}}$ root, and starting from this one, $z_{i,2},\ldots,z_{i,k}$ for the faces encountered when travelling anticlockwise around the $i^{{\rm th}}$ vertex.  The notation $e = (f_1,\,f_2)$ means that $e$ is the edge surrounded by the faces $f_1$ and $f_2$, and $f\mapsto v$ means that $f$ surrounds $v$.
\hfill $\star$
\ed

Note that white vertices have weight $1$ in this formula, and that if $\Lambda = \emptyset$, the (multi-)ciliated maps $M$ with $\mathcal{F}^{\circ}(M)\neq \emptyset$ have vanishing weight: $\mathscr{W}_{\rm cil}(M) =0$.

\bd[\emph{Generating series of (multi-)ciliated maps}]\label{def:generating:functions}
\begin{eqnarray*} 
C_{g,n}(Z) &=&\sum\limits_{M\in\mathbf{C}_{g,n}} \alpha^{-\deg M}\,\mathscr{W}_{\rm cil}(M) \\
 &=& \sum\limits_{\delta\geq 2g - 2  + n}\alpha^{-\delta} \sum\limits_{M\in\mathbf{C}_{g,n}^{\delta}} \mathscr{W}_{\rm cil}(M), \\
S_{g;\underline{k}}(\underline{Z}) &=&\sum\limits_{M\in\mathbf{S}_{g;\underline{k}}} \alpha^{-\deg M}\,\mathscr{W}_{\rm cil}(M) \\ 
 &=& \sum\limits_{\delta\geq 2g - 2 + n}\alpha^{-\delta} \sum\limits_{M\in\mathbf{S}_{g;\underline{k}}^{\delta}} \mathscr{W}_{\rm cil}(M).
\end{eqnarray*}  
\hfill $\star$
\ed
Those generating series are well-defined formal series in $\alpha^{-1}$, except $C_{0,1}$ which is in addition contains the term $\alpha$. The dependence on $\alpha^{-1}$ and $\lambda$s and $t$s has been omitted from the notation. We now recall a key recursion on the degree of white vertices for multi-ciliated generating series.

\begin{lem} \cite{BCEG-F}
\label{lem:multicil:to:cil} If $k_1 \geq 2$, set $\underline{k}'=(k_1-1,k_2,\ldots,k_n)$.  We have:
\begin{equation*}
\begin{split}
S_{g;\underline{k}}(Z_1,\ldots,Z_n)&=\frac{1}{\alpha}\frac{S_{g;\underline{k}'}\left([z_{1,1},z_{1,3},\ldots,z_{1,k_1}],Z_2,\ldots,Z_n\right)-S_{g;\underline{k}'}\left([z_{1,2},\ldots,z_{1,k_1}],Z_2,\ldots,Z_n\right)}{V'(z_{1,1})-V'(z_{1,2})} \\
& \quad + \delta_{g,0}\delta_{n,1}\delta_{k_1,2}\mathscr{P}(z_{1,1},z_{1,2}).
\end{split}
\end{equation*}
 \hfill $\star$
\end{lem}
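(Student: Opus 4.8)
The plan is to prove the recursion by \emph{peeling} the first white vertex $v_1$, whose degree $k_1 \geq 2$ we want to lower by one. It is convenient to first clear the denominator and read the claim as a loop (Schwinger--Dyson) equation: multiplying by $\alpha\,(V'(z_{1,1}) - V'(z_{1,2}))$ and using the propagator identity $\mathscr{P}(z_{1,1},z_{1,2})\,(V'(z_{1,1}) - V'(z_{1,2})) = z_{1,1} - z_{1,2}$, the statement to establish becomes the polynomial identity
\begin{equation*}
\alpha\,(V'(z_{1,1}) - V'(z_{1,2}))\,S_{g;\underline{k}}(\underline{Z}) = S_{g;\underline{k}'}([z_{1,1},z_{1,3},\ldots,z_{1,k_1}],\ldots) - S_{g;\underline{k}'}([z_{1,2},\ldots,z_{1,k_1}],\ldots) + \alpha\,(z_{1,1}-z_{1,2})\,\delta_{g,0}\delta_{n,1}\delta_{k_1,2}.
\end{equation*}

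The combinatorial move is as follows. Around $v_1$ the marked sectors carry the labels $z_{1,1},\ldots,z_{1,k_1}$ anticlockwise, and the \emph{star constraint} guarantees that $z_{1,1}$ and $z_{1,2}$ are distinct faces. Focus on the edge $\epsilon$ of $v_1$ separating the first two sectors, so that $z_{1,1}$ and $z_{1,2}$ are precisely the two faces flanking $\epsilon$; this edge carries the propagator $\mathscr{P}(z_{1,1},z_{1,2})$. Let $v'$ be its other endpoint. Since both $z_{1,1}$ and $z_{1,2}$ surround $v'$ and also surround the white vertex $v_1$, the \emph{uniqueness constraint} forces $v'$ to be black (unless $\epsilon$ is a loop based at $v_1$, the degenerate case treated below). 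Deleting $\epsilon$ merges $z_{1,1}$ and $z_{1,2}$ into a single face, lowers the degree of $v_1$ from $k_1$ to $k_1-1$, and decreases $\deg M = 2g-2+\#\mathcal{F}(M)$ by one; through the weight $\alpha^{-\deg M}$ this is the source of the global factor $\alpha^{-1}$. The two terms on the right then correspond to the two ways of decorating the merged sector by the surviving label $z_{1,1}$ or $z_{1,2}$.

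The heart of the argument is to check that summing the weights after deletion reproduces the numerator. Deleting $\epsilon$ also lowers the degree of the black vertex $v'$ from $d$ to $d-1$, so one must show that the local weight $\mathscr{V}_d$ at $v'$ turns, after inserting $V'(z_{1,1})$ (resp.\ $V'(z_{1,2})$) and stripping the $\epsilon$-propagator, into the weight $\mathscr{V}_{d-1}$ of the contracted vertex with the merged face decorated by $z_{1,1}$ (resp.\ $z_{1,2}$). This is a telescoping/partial-fraction identity most transparent from the residue presentation $\mathscr{V}_d = \Res_{u=\infty}\tfrac{V'(u)\,\dd u}{\prod_j(u-u_j)}$; the boundary case $d=3$, where the vertex would become bivalent, is exactly accommodated because a degree-$2$ black weight $\mathscr{V}_2(z_{1,1},z_{1,2}) = -(V'(z_{1,1})-V'(z_{1,2}))/(z_{1,1}-z_{1,2})$ is the inverse propagator, so such a vertex collapses consistently into an edge. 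Assembling these local contributions over all maps yields $S_{g;\underline{k}'}([z_{1,1},\ldots]) - S_{g;\underline{k}'}([z_{1,2},\ldots])$, while the $\epsilon$-propagator supplies the denominator $V'(z_{1,1})-V'(z_{1,2})$. The configuration in which $\epsilon$ is a loop at $v_1$ with no black vertex attached can occur only for $(g,n,k_1)=(0,1,2)$, cannot be reduced this way, and contributes precisely the explicit term $\mathscr{P}(z_{1,1},z_{1,2})$.

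The main obstacle is this local bookkeeping at $v'$: one must run the case analysis over $d\in\{3,\ldots,r+1\}$, verify the $\mathscr{V}_d\to\mathscr{V}_{d-1}$ telescoping uniformly (including $d=3$ and the loop case $v'=v_1$), and confirm that the reduced map genuinely lies in $\mathbf{S}_{g;\underline{k}'}$ --- in particular that merging $z_{1,1}$ and $z_{1,2}$ does not violate the star or uniqueness constraints. Once these are in place, the Euler-characteristic accounting producing the single factor $\alpha^{-1}$ and the identification of the two surviving labels with the two terms of the difference quotient are routine.
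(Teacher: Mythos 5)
First, a caveat: the paper does not actually prove this lemma --- it is imported verbatim from \cite{BCEG-F} --- so your proposal must be judged against what a complete proof requires rather than against an in-paper argument. Your combinatorial skeleton is the right one (the edge $\epsilon$ between the sectors labelled $z_{1,1},z_{1,2}$, the degenerate loop map producing the $\delta_{g,0}\delta_{n,1}\delta_{k_1,2}\mathscr{P}(z_{1,1},z_{1,2})$ term, the Euler count $\deg M=\deg M'+1$ producing $\alpha^{-1}$), but the heart of your argument is wrong as stated. You present the weight identity as a \emph{local} cancellation at the single black vertex $v'$, holding map-by-map under deletion of $\epsilon$. It cannot hold map-by-map: deleting $\epsilon$ relabels \emph{every} edge and black vertex adjacent to $z_{1,1}$ or $z_{1,2}$ (all their occurrences become the merged label), and the deletion map is many-to-one --- all maps obtained by re-attaching $\epsilon$ at any corner of the merged face project to the same $M'$. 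A concrete failure: take $M$ with white vertex of degree $2$ joined by two edges to a black vertex of degree $4$ carrying a loop with unmarked face $u$; its weight $\mathscr{P}(z_{1,1},z_{1,2})^2\,\mathscr{P}(u,z_{1,2})\,\mathscr{V}_4(z_{1,1},z_{1,2},u,z_{1,2})$ is not symmetric under $z_{1,1}\leftrightarrow z_{1,2}$, whereas any expression of the form $\bigl(F(z_{1,1})-F(z_{1,2})\bigr)/\bigl(V'(z_{1,1})-V'(z_{1,2})\bigr)$ is; only the \emph{sum} over the two maps in the fiber of $M'$ matches the divided difference. The correct mechanism is a Leibniz/telescoping rule for the divided difference applied to the product of \emph{all} weight factors adjacent to the merged face, ordered along its boundary, with slots before the attachment point receiving $z_{1,1}$ and slots after receiving $z_{1,2}$; the terms where the difference hits a vertex slot reproduce your (correct) residue identity $\mathscr{V}_d(z_{1,1},\vec u)-\mathscr{V}_d(z_{1,2},\vec u)=(z_{1,1}-z_{1,2})\,\mathscr{V}_{d+1}(z_{1,1},z_{1,2},\vec u)$, and these are exactly the maps where $\epsilon$ ends at a pre-existing black corner.

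The second gap is that the telescoping also produces terms where the difference hits a \emph{propagator} slot, and these neither vanish nor cancel: they account for the maps in which $v'$ is trivalent, i.e. $\epsilon$ ends at a new trivalent black vertex subdividing an edge on the face boundary (such $M$ are not obtained from any $M'$ by your deletion alone, since deletion would leave a forbidden bivalent black vertex). Matching these terms requires the further identity
\begin{equation*}
\frac{\mathscr{P}(z_{1,1},u)-\mathscr{P}(z_{1,2},u)}{V'(z_{1,1})-V'(z_{1,2})}
=\mathscr{P}(z_{1,1},z_{1,2})\,\mathscr{P}(z_{1,1},u)\,\mathscr{P}(z_{1,2},u)\,\mathscr{V}_3(z_{1,1},z_{1,2},u),
\end{equation*}
which your proposal never states. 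Your substitute --- collapsing a bivalent black vertex because ``$\mathscr{V}_2$ is the inverse propagator'' --- does not work: $\mathscr{V}_2=-1/\mathscr{P}$, so an edge--bivalent-vertex--edge chain has weight $\mathscr{P}\,\mathscr{V}_2\,\mathscr{P}=-\mathscr{P}$, and the stray sign would wreck the enumeration. (Two smaller points you would also need: the cap $\deg(v_\bullet)\le r+1$ is handled automatically because $\mathscr{V}_d\equiv 0$ for $d>r+1$ by the residue formula, so attachment at a vertex of maximal degree contributes zero; and a black vertex may have several corners on the merged face, each giving a distinct map in the fiber.) So the strategy is salvageable, but the fiber-sum telescoping and the propagator identity are the actual content of the proof, not routine bookkeeping.
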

Applying this formula recursively, we can express the generating series of multi-ciliated maps in terms of the generating series of ciliated maps:
\begin{lem} \cite{BCEG-F} \label{lem:multicil:to:cil:1} Recall that $Z_i=\left[z_{i,1},\ldots,z_{i,k_i}\right]$ for $i \in \{1,\ldots,n\}$. We have:
\begin{equation}\label{eq:multicil:to:cil:3}
S_{g;\underline{k}}(Z_1,\ldots,Z_n) = \frac{1}{\alpha^{k_1+\ldots+k_n-n}}  \sum\limits_{j_1=1}^{k_1}\cdots\sum\limits_{j_n=1}^{k_n} \frac{C_{g,n}(z_{1,j_1},\ldots,z_{n,j_n}) + \delta_{g,0}\delta_{n,1} \alpha z_{1,j_1}}{\prod\limits_{i=1}^{n}\prod\limits_{\substack{l_i=1\\ l_i\neq j_i}}^{k_i} \big(V'(z_{i,j_i})-V'(z_{i,l_i})\big)}. 
\end{equation}
\hfill $\star$
\end{lem}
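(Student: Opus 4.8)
The plan is to prove the formula by iterating the single-step recursion of Lemma~\ref{lem:multicil:to:cil} in each of the $n$ variable blocks $Z_i$, and to recognise the outcome as an $n$-fold Newton divided difference. The first observation is that Lemma~\ref{lem:multicil:to:cil}, read in the variables $w_{1,a} \coloneqq V'(z_{1,a})$, is precisely the recursion that defines divided differences: up to the factor $\frac1\alpha$, its right-hand side is the difference quotient of $S_{g;\underline{k}'}$ with respect to the first two nodes $w_{1,1},w_{1,2}$. Although the lemma is written for the first block, $S_{g;\underline{k}}$ is symmetric under simultaneous relabelling of the white vertices and their associated tuples, so after relabelling the same recursion applies to any block. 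I would therefore fix $(g,n)$ and induct on $|\underline{k}| = k_1 + \cdots + k_n$, reducing one block at a time.

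For the main term, note that when $(g,n) \neq (0,1)$ the correction in Lemma~\ref{lem:multicil:to:cil} vanishes identically (it carries the factor $\delta_{g,0}\delta_{n,1}$). Iterating the pure difference quotient in the first block until $k_1$ drops to $1$ then produces
\[
S_{g;(k_1,k_2,\ldots,k_n)} = \frac{1}{\alpha^{k_1-1}} \sum_{j_1=1}^{k_1} \frac{S_{g;(1,k_2,\ldots,k_n)}([z_{1,j_1}],Z_2,\ldots,Z_n)}{\prod_{l_1 \neq j_1}\big(V'(z_{1,j_1})-V'(z_{1,l_1})\big)},
\]
by the standard unfolding of the divided-difference recursion: each of the $k_1-1$ reduction steps contributes one factor $\frac1\alpha$, and the unfolded expression is automatically symmetric in $z_{1,1},\ldots,z_{1,k_1}$, hence independent of the order in which the two active nodes are eliminated. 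Repeating this in blocks $2,\ldots,n$ and using $S_{g;(1,\ldots,1)} = C_{g,n}$ (valid since $\mathbf{C}_{g,n} = \mathbf{S}_{g;(1,\ldots,1)}$) composes the $n$ divided differences into the claimed product denominator, with total prefactor $\alpha^{-\sum_i(k_i-1)} = \alpha^{-(|\underline{k}|-n)}$. Verifying that the nested reductions compose into a single symmetric sum is routine inductive bookkeeping.

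The delicate point, which I expect to be the main obstacle, is the genus-zero one-boundary case $(g,n)=(0,1)$, where the correction $\delta_{k_1,2}\mathscr{P}(z_{1,1},z_{1,2})$ survives. Rather than tracking the individual corrections created throughout the recursion tree, I would absorb them into a shifted one-point function: set $\widehat{C}_{0,1}(z) \coloneqq C_{0,1}(z) + \alpha z$ and observe that $\mathscr{P}(z_1,z_2) = \frac{z_1-z_2}{V'(z_1)-V'(z_2)} = \frac1\alpha \cdot \frac{\alpha z_1 - \alpha z_2}{V'(z_1)-V'(z_2)}$ is exactly the normalised divided difference of the linear function $\alpha z$. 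With this observation, the corrected recursion based at $S_{0;(1)} = C_{0,1}$ coincides, for every $k_1 \geq 2$, with the plain divided-difference recursion based at $\widehat{C}_{0,1}$; I would establish this equivalence by induction on $k_1$, the only nontrivial check being at $k_1 = 2$, where the correction enters. Unfolding the plain recursion then yields the claimed numerator $C_{0,1}(z_{1,j_1}) + \alpha z_{1,j_1}$. The subtlety to flag is that this identification holds for $k_1 \geq 2$: at the degenerate base $k_1 = 1$ one has $S_{0;(1)} = C_{0,1}$ rather than $\widehat{C}_{0,1}$, so the $\alpha z$ shift is genuinely an artefact of the reduction and must be introduced only once the recursion has been applied at least once.
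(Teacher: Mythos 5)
Your proof is correct and follows exactly the route the paper indicates for this lemma (which it imports from \cite{BCEG-F}): iterate the one-step recursion of Lemma~\ref{lem:multicil:to:cil}, unfold it as an $n$-fold divided difference in the nodes $V'(z_{i,l})$, and observe that the $\mathscr{P}$-correction arising only at $(g,n,k_1)=(0,1,2)$ is itself the divided difference of $\alpha z$, which is precisely what produces the shifted numerator $C_{0,1}(z_{1,j_1})+\alpha z_{1,j_1}$ --- the same mechanism the paper records in Remark~\ref{rem:addu}. Your closing caveat is also well taken: the unfolded formula is valid once the recursion has been applied at least once, so the degenerate case $(g,n,\underline{k})=(0,1,(1))$, where $S_{0;(1)}=C_{0,1}$ carries no $\alpha z$ shift, must indeed be excluded from the reading of \eqref{eq:multicil:to:cil:3}.
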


\section{Relating fully simple maps and multi-ciliated maps} \label{sec:preliminaries}
\label{S3}

The aim of this section is to show that multi-ciliated maps are dual to fully simple maps. It is convenient to do so via the permutation model for maps presented \textit{e.g.} in \cite{lan-zvo04}.

\subsection{Permutation model for fully simple maps} \label{subsec:maps}

An unrooted map can be encoded into a triple $(\sigma_0, \sigma_1, \sigma_2)$ of permutations acting on the set $\vec{\mathcal{E}}$ of oriented edges, in which
\begin{itemize}
\item $\sigma_0$ rotates each oriented edge anticlockwise around the vertex it is incident to;
\item $\sigma_1$ is the fixed-point-free involution swapping the two oriented edges with same underlying edge; 
\item $\sigma_2$ rotates each oriented edge anticlockwise around the face it is adjacent to (\textit{i.e.} located to its left).
\end{itemize}
The vertices, edges and faces of the map correspond respectively to the cycles of $\sigma_0$, $\sigma_1$ and $\sigma_2$.

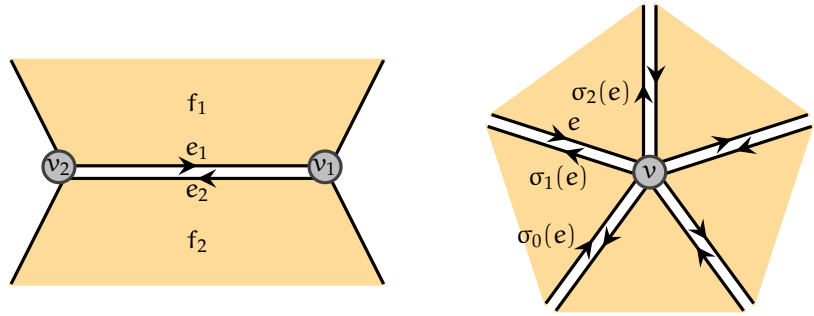
\begin{figure}[ht!]
\centering
\begin{tikzpicture}[scale=0.7]
\begin{scope}[very thick,decoration={markings, mark=at position 0.5 with {\arrow[scale=1.4]{stealth}}}] 
	\filldraw[orange!40!white] (-1,2.12) -- (0,0.12) -- (5,0.12) -- (6,2.12) -- cycle;
	\filldraw[orange!40!white] (-1,-2.12) -- (0,-0.12) -- (5,0-.12) -- (6,-2.12) -- cycle;
	\draw[postaction={decorate}] (0,0.12) -- (5,0.12);
	\draw[postaction={decorate}] (5,-0.12) -- (0,-0.12);
	\draw (0,0.12) -- (-1,2.12);
	\draw (5,0.12) -- (6,2.12);
	\draw (0,-0.12) -- (-1,-2.12);
	\draw (5,-0.12) -- (6,-2.12);
	\node at (2.5,0.4) {$e_1$};
	\node at (2.5,-0.4) {$e_2$};
	\node at (2.5,1.3) {$f_1$};
	\node at (2.5,-1.3) {$f_2$};	
	\begin{scope}[shift={(-0.1,0.1)}]
		\filldraw[fill=lightgray,draw=darkgray] (0,0) circle (0.3);
		\node at (0,0) {$v_2$};
	\end{scope}
	\begin{scope}[shift={(-0.1,0.1)}]
		\filldraw[fill=lightgray,draw=darkgray] (5,0) circle (0.3);
		\node at (5,0) {$v_1$};
	\end{scope}
\end{scope}
\begin{scope}[shift={(11,0)},very thick,decoration={markings, mark=at position 0.5 with {\arrow[scale=1.4]{stealth}}}] 
	\begin{scope}[rotate=18]
		\filldraw[orange!40!white,shift={(36:0.2)}] (0:3) -- (0,0) -- (72:3) -- cycle;
		\draw[postaction={decorate},shift={(36:0.2)}] (0,0) -- (0:3);
		\draw[postaction={decorate},shift={(36:0.2)}] (72:3) -- (0,0);
	\end{scope}
	\begin{scope}[rotate=90]
		\filldraw[orange!40!white,shift={(36:0.2)}] (0:3) -- (0,0) -- (72:3) -- cycle;
		\draw[postaction={decorate},shift={(36:0.2)}] (0,0) -- (0:3);
		\draw[postaction={decorate},shift={(36:0.2)}] (72:3) -- (0,0);
	\end{scope}
	\begin{scope}[rotate=162]
		\filldraw[orange!40!white,shift={(36:0.2)}] (0:3) -- (0,0) -- (72:3) -- cycle;
		\draw[postaction={decorate},shift={(36:0.2)}] (0,0) -- (0:3);
		\draw[postaction={decorate},shift={(36:0.2)}] (72:3) -- (0,0);
	\end{scope}
	\begin{scope}[rotate=234]
		\filldraw[orange!40!white,shift={(36:0.2)}] (0:3) -- (0,0) -- (72:3) -- cycle;
		\draw[postaction={decorate},shift={(36:0.2)}] (0,0) -- (0:3);
		\draw[postaction={decorate},shift={(36:0.2)}] (72:3) -- (0,0);
	\end{scope}
	\begin{scope}[rotate=306]
		\filldraw[orange!40!white,shift={(36:0.2)}] (0:3) -- (0,0) -- (72:3) -- cycle;
		\draw[postaction={decorate},shift={(36:0.2)}] (0,0) -- (0:3);
		\draw[postaction={decorate},shift={(36:0.2)}] (72:3) -- (0,0);
	\end{scope}
	\filldraw[fill=lightgray,draw=darkgray] (0,0) circle (0.3);
	\node at (0,0) {$v$};
	\node[above,shift={(0,0.1)}] at (162:1.5) {$e$};
	\node[left,shift={(-0.1,0)}] at (90:1.5) {$\sigma_2(e)$};
	\node[below,shift={(-0.2,-0.1)}] at (162:1.5) {$\sigma_1(e)$};
	\node[left,shift={(-0.2,0)}] at (234:1.5) {$\sigma_0(e)$};
\end{scope}
\end{tikzpicture}
\caption{The left panel depicts the local structure of an edge in a map. The oriented edges $e_1$ and $e_2$ are indicated by the arrows. With our conventions, $e_i$ is {\em adjacent} to face $f_i$ and {\em incident} to vertex $v_i$ for $i = 1, 2$. The right panel depicts the local structure of a vertex in a map, including the action of the permutations $\sigma_0, \sigma_1, \sigma_2$ on an oriented edge $e$.}
\label{fig:vertex}
\end{figure}

It follows that $\sigma_0 \circ \sigma_1 \circ \sigma_2 = \mathrm{Id}$. This can easily be adapted to describe rooted maps.

\begin{lem}
A rooted map can be encoded by a triple $(\sigma_0, \sigma_1, \sigma_2)$ of permutations in $\mathfrak{S}(\vec{\mathcal{E}})$ and a tuple $\mathcal{R} \in \vec{\mathcal{E}}^n$ such that
\begin{itemize}
\item $\sigma_1$ is a fixed-point-free involution;
\item $\sigma_0 \circ \sigma_1 \circ \sigma_2 = \mathrm{Id}$;
\item no two elements of $\mathcal{R}$ belong to the same cycle of $\sigma_2$.
\end{itemize}
The data $(\sigma_0, \sigma_1, \sigma_2; \mathcal{R})$ and $(\sigma'_0, \sigma'_1, \sigma'_2; \mathcal{R}')$ define equivalent maps if and only if there exists a bijection $\phi: \vec{\mathcal{E}} \to \vec{\mathcal{E}}'$ that sends $\mathcal{R}$ to $\mathcal{R}'$ and satisfies $\sigma'_i = \phi  \circ \sigma_i \circ \phi^{-1}$ for $i \in \{0, 1, 2\}$. \hfill $\star$
\end{lem}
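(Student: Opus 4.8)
The plan is to establish a bijection between rooted maps up to equivalence and tuples $(\sigma_0,\sigma_1,\sigma_2;\mathcal{R})$ satisfying the three listed constraints up to simultaneous conjugation preserving $\mathcal{R}$, building on the unrooted correspondence already recalled above. In the forward direction, given a rooted map I would take $\vec{\mathcal{E}}$ to be its set of oriented edges and define $\sigma_0,\sigma_1,\sigma_2$ exactly as in the paragraph preceding Figure~\ref{fig:vertex}: the orientation of the underlying surface makes the anticlockwise rotations $\sigma_0$ (around incident vertices) and $\sigma_2$ (around adjacent faces) well-defined permutations, and $\sigma_1$ is by construction the fixed-point-free involution exchanging the two orientations of each edge. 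The relation $\sigma_0\circ\sigma_1\circ\sigma_2=\mathrm{Id}$ is the one already observed from the local picture. I then set $\mathcal{R}$ to be the ordered tuple of roots. The only genuinely new point is that the defining condition on roots---no two roots are adjacent to the same face---translates precisely into ``no two elements of $\mathcal{R}$ lie in the same cycle of $\sigma_2$'', since the cycles of $\sigma_2$ are exactly the faces.

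For the reverse direction, given such a tuple I would reconstruct a map by the standard ribbon-graph (band surface) construction: create one vertex for each cycle of $\sigma_0$, one edge for each cycle of the involution $\sigma_1$, and glue edge-bands in the cyclic rotation order prescribed by $\sigma_0$; capping the resulting boundary circles---which correspond to the cycles of $\sigma_2$---by discs yields an oriented compact surface into which the graph is properly embedded, its faces being the cycles of $\sigma_2$. The anticlockwise convention fixes the orientation, and Euler's relation recovers the genus, consistently with Definition~\ref{def:degree}. The tuple $\mathcal{R}$ designates the roots, and the cycle condition on $\mathcal{R}$ is exactly what is needed for this to be admissible root data. Here I would flag that the correspondence lands in \emph{connected} maps precisely when $\langle\sigma_0,\sigma_1,\sigma_2\rangle$ acts transitively on $\vec{\mathcal{E}}$; since the maps of the paper are connected by definition, this transitivity is an implicit standing hypothesis on the permutation data. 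Checking that the two constructions are mutually inverse up to the stated equivalences is then routine cell-by-cell bookkeeping.

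It remains to prove the equivalence criterion. For the direct implication, an orientation-preserving homeomorphism realising the equivalence of two rooted maps restricts to a bijection $\phi\colon\vec{\mathcal{E}}\to\vec{\mathcal{E}}'$; because it preserves the graph structure, the orientation and the tuple of roots, $\phi$ carries $\mathcal{R}$ to $\mathcal{R}'$ and intertwines the rotations and the edge involution, giving $\sigma_i'=\phi\circ\sigma_i\circ\phi^{-1}$. For the converse, a conjugating bijection $\phi$ sending $\mathcal{R}$ to $\mathcal{R}'$ matches vertices, edges and faces (the cycles of $\sigma_0$, $\sigma_1$, $\sigma_2$) while respecting their cyclic orderings, hence extends cell by cell to a homeomorphism of the two surfaces preserving orientation and roots. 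I expect this last step to be the main obstacle: one must argue that a combinatorial isomorphism of the rotation systems genuinely extends to an \emph{orientation-preserving} homeomorphism of the reconstructed surfaces. This is the topological rigidity statement that the ribbon graph determines the embedding surface up to orientation-preserving homeomorphism; the care lies in building the extension coherently over vertex-discs, edge-bands and face-discs, and in verifying that preservation of the $\sigma_0$-orders forces the orientation to be preserved rather than reversed.
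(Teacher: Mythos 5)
Your proposal is correct, and it is essentially the argument the paper relies on: the paper gives no proof of this lemma at all, treating it as an immediate adaptation of the classical rotation-system/ribbon-graph correspondence recalled just before it (and cited to the literature), with the only new observation being exactly the one you isolate --- that the condition ``no two roots adjacent to the same face'' translates into ``no two elements of $\mathcal{R}$ in the same cycle of $\sigma_2$''. Your additional remark that connectedness of the map corresponds to transitivity of $\langle\sigma_0,\sigma_1,\sigma_2\rangle$ on $\vec{\mathcal{E}}$ is a genuine (if minor) refinement left implicit in the paper's statement, and your flagged rigidity step in the converse of the equivalence criterion is the standard ribbon-graph reconstruction, handled correctly.
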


\subsubsection{Characterisation of simplicity in the permutation model}
This permutation model allows the following characterisation of simple maps. Suppose that a map is given by the data $(\sigma_0, \sigma_1, \sigma_2 ; \mathcal{R})$ with  $\mathcal{R}=(e_1,\ldots,e_n)$. For $i\in\{1,\ldots,n\}$, define the set $\mathcal{B}_i \subseteq \vec{\mathcal{E}}$ to be the $\sigma_2$-orbits of $e_i$. We observe that $\mathcal{B}_i$ naturally corresponds to the set of boundary edges around the $i^{\textup{th}}$ marked face, and this face is simple if and only if the elements of $\mathcal{B}_i$ belong to pairwise distinct $\sigma_0$-orbits.

\medskip

Let us describe the characterisation of simple maps in a slightly different way, using a notation that will later be useful. For $i\in\{1,\ldots,n\}$, denote by $\sigma_0^{\partial_i} \in \mathfrak{S}(\mathcal{B}_i)$ the permutation obtained by expressing $\sigma_0 \in \mathfrak{S}(\vec{\mathcal{E}})$ as a union of disjoint cycles and deleting those elements that do not lie in $\mathcal{B}_i$. If $e \in \mathcal{B}_i$ is an oriented edge incident to the vertex $v$, then $\sigma_0^{\partial_i}(e)$ is the next oriented edge in $\mathcal{B}_i$ incident to $v$ that is encountered when turning anticlockwise around $v$. Then the $i^{\textup{th}}$ boundary face is simple if and only if the permutation $\sigma_0^{\partial_i}$ is the identity permutation.

\subsubsection{Characterisation of full simplicity in the permutation model}

For fully simple maps, define the set $\mathcal{B} \subseteq \vec{\mathcal{E}}$ to be the union of the $\sigma_2$-orbits of the elements of $\mathcal{R}$. We observe that $\mathcal{B}$ naturally corresponds to the set of boundary edges. Then, the map is fully simple if and only if the elements of $\mathcal{B}$ belong to pairwise distinct $\sigma_0$-orbits. Equivalently, denote by $\sigma_0^\partial \in \mathfrak{S}(\mathcal{B})$ the permutation obtained by expressing $\sigma_0 \in \mathfrak{S}(\vec{\mathcal{E}})$ as a union of disjoint cycles and deleting those elements that do not lie in $\mathcal{B}$. If $e \in \mathcal{B}$ is an oriented edge incident to the vertex $v$, then $\sigma_0^\partial(e)$ is the next oriented edge in $\mathcal{B}$ incident to $v$ that is encountered when turning anticlockwise around $v$. A map is then fully simple if and only if the permutation $\sigma_0^\partial$ is the identity permutation.

\subsection{Characterisation of multi-ciliated maps in the permutation model}\label{staruni}
Next, we show that the star constraint concept on white vertices of multi-ciliated maps is the dual of simplicity, and that adding furthemore the uniqueness constraint, we get the dual concept of full simplicity.

\begin{lem}
A multi-ciliated map with $n$ white vertices is encoded into a triple $(\sigma_0', \sigma_1',\sigma_2')$ of permutations in $\mathfrak{S}(\vec{\mathcal{E}}')$ and a tuple $\mathcal{R}' \in (\vec{\mathcal{E}}')^n$ such that
\begin{itemize}
\item $\sigma_1'$ is a fixed-point-free involution;
\item $\sigma_0' \circ \sigma_1' \circ \sigma_2' = \mathrm{Id}$; and
\item no two elements of $\mathcal{R}'$ lie in the same cycle of $\sigma_0'$.
\end{itemize}
The data $(\sigma_0',\sigma_1', \sigma_2'; \mathcal{R}')$ and $(\sigma_0'', \sigma_1'',\sigma_2''; \mathcal{R}'')$ define equivalent maps if and only if there exists a bijection $\phi\,:\,\vec{\mathcal{E}}' \rightarrow \vec{\mathcal{E}}''$ that sends $\mathcal{R}'$ to $\mathcal{R}''$ and satisfies $\sigma''_i = \phi \circ \sigma_i' \circ \phi^{-1}$ for $i \in \{0, 1, 2\}$. \hfill $\star$
\end{lem}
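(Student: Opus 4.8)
The plan is to prove this as the vertex-marked counterpart of the face-rooted encoding recorded in the preceding lemma; morally it is the image of that lemma under map duality, which exchanges vertices and faces (and thus the roles of $\sigma_0$ and $\sigma_2$), turning ``roots in distinct $\sigma_2$-orbits'' into ``roots in distinct $\sigma_0$-orbits''. To keep the argument self-contained I would argue directly. First I would invoke the standard permutation model recalled above: the underlying connected map of genus $g$ of a multi-ciliated map $M$ is encoded by a triple $(\sigma_0', \sigma_1', \sigma_2') \in \mathfrak{S}(\vec{\mathcal{E}}')^3$ with $\sigma_1'$ a fixed-point-free involution and $\sigma_0' \circ \sigma_1' \circ \sigma_2' = \mathrm{Id}$, the cycles of $\sigma_0'$, $\sigma_1'$, $\sigma_2'$ being respectively the vertices, edges and faces. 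This supplies the first two bullet points at once.

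Next I would encode the white-vertex data. Each of the $n$ labelled white vertices of $M$ carries a distinguished incident oriented edge, and collecting these in label order gives a tuple $\mathcal{R}' = (e_1', \ldots, e_n') \in (\vec{\mathcal{E}}')^n$. Since an oriented edge is incident to a vertex $v$ exactly when it lies in the $\sigma_0'$-orbit of $v$, and the $n$ white vertices are pairwise distinct, the entries of $\mathcal{R}'$ lie in $n$ distinct $\sigma_0'$-orbits, which is the third bullet point. The key observation I would stress is that the black/white colouring is \emph{not} extra data: a vertex is white precisely when its $\sigma_0'$-orbit meets $\mathcal{R}'$, and this recovery is unambiguous exactly because each white vertex carries a single root while the distinct-orbit condition forbids two roots from sharing a vertex. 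Conversely, from any data satisfying the three bullet points I would reconstruct the underlying marked map, colouring white (labelled by the index of the corresponding root, with chosen incident edge that root) those $\sigma_0'$-orbits meeting $\mathcal{R}'$ and black the rest. I would note that the remaining defining features of a multi-ciliated map --- the vertex-degree bounds and the star and uniqueness constraints of \Cref{def:const:vertices,def:multicil:konts} --- are not encoded by the three bullet points and are characterised separately in the rest of this subsection; since conjugation preserves degrees and incidences, these constraints are respected by the equivalence below, so the correspondence restricts correctly to the class of multi-ciliated maps.

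For the equivalence criterion I would use that, in the permutation model, an orientation-preserving homeomorphism inducing an isomorphism of the underlying maps is the same as a bijection $\phi: \vec{\mathcal{E}}' \to \vec{\mathcal{E}}''$ with $\sigma_i'' = \phi \circ \sigma_i' \circ \phi^{-1}$ for $i \in \{0,1,2\}$. Such a $\phi$ matches the labelled white vertices together with their distinguished incident edges if and only if it sends $\mathcal{R}'$ to $\mathcal{R}''$ as ordered tuples; colour-preservation is then automatic, since the colouring is determined by the roots. This yields the stated ``if and only if''.

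The step I expect to require the most care is the faithful recovery of the white/black colouring and of the per-vertex root from $\mathcal{R}'$ alone: one must check that the induced labelling is well defined (no white vertex is marked twice and no two distinct white vertices collide), which is precisely where the one-root-per-white-vertex feature of \Cref{def:multicil:konts} and the distinct-$\sigma_0'$-orbit condition intervene, and that the ordered-tuple --- rather than merely set-theoretic --- matching of $\mathcal{R}'$ with $\mathcal{R}''$ correctly transcribes the labelling of the white vertices.
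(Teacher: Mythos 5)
Your proof is correct and takes essentially the same route as the paper's, which simply takes the standard permutation encoding of the underlying map together with the tuple of roots and observes that all three conditions hold by construction. Your additional verifications --- the third bullet via distinctness of the white vertices, recovery of the black/white colouring from $\mathcal{R}'$, and the conjugation criterion for equivalence --- are sound elaborations of what the paper leaves implicit.
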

\begin{proof}

We take $(\sigma_0',\sigma_1',\sigma_2')$ the triple of permutations encoding the underlying map. For each $i \in \{1,\ldots,n\}$, we take $e_i$ to be the $i^{{\rm th}}$ root. We then set $\mathcal{R}' = (e_1,\ldots,e_n)$. By construction all the  conditions announced in the lemma are satisfied.
\end{proof}

In the rest of the subsection, multi-ciliated maps $M'$ are replaced by a corresponding permutation model $(\sigma'_0, \sigma'_1,\sigma_2';\mathcal{R}')$ with $\mathcal{R}' =(e_1,\ldots,e_n)$.

\subsubsection{Star constraint}

The permutation model allows the following characterisation of the star-constraint. For $i\in\{1,\ldots,n\}$, define the set $\mathcal{B}_i' \subseteq \vec{\mathcal{E}}'$ to be the $\sigma_0'$-orbit of the $e_i$. We observe that it corresponds naturally to the set of oriented edges incident to the $i^{\textup{th}}$ white vertex. Denote by $(\sigma_2')^{\partial_i} \in \mathfrak{S}(\mathcal{B}_i')$ the permutation obtained by expressing $\sigma_2' \in \mathfrak{S}(\vec{\mathcal{E}}')$ as a union of disjoint cycles and deleting those elements that do not lie in $\mathcal{B}'_i$. If $e \in \mathcal{B}_i'$ is an oriented edge adjacent to a marked face $f$, then $(\sigma_2')^{\partial_i}(e)$ is the next oriented edge in $\mathcal{B}_i'$ met when turning anticlockwise around $f$. The $i^{\textup{th}}$ white vertex satisfies the star constraint if and only if the permutation $(\sigma_2')^{\partial_i}$ is the identity permutation.

\begin{lem}[Star = dual of simplicity]
The $i^{\textup{th}}$ white vertex of a multi-ciliated map satisfies the star constraint if and only if the $i^{\textup{th}}$ boundary face of the dual map is simple. \hfill $\star$
\end{lem}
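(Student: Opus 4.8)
The plan is to work entirely inside the permutation model and to reduce both sides of the claimed equivalence to the same statement, namely the triviality of a restricted permutation on a common set of oriented edges. Recall that duality exchanges the roles of vertices and faces while fixing edges: starting from the multi-ciliated map $M'$ encoded by $(\sigma_0',\sigma_1',\sigma_2';\mathcal{R}')$ with $\mathcal{R}'=(e_1,\ldots,e_n)$, I would encode its dual map $M$ on the same set $\vec{\mathcal{E}}'$ of oriented edges by the triple $(\sigma_0,\sigma_1,\sigma_2)=\big((\sigma_2')^{-1},\,\sigma_1',\,(\sigma_0')^{-1}\big)$ together with the same roots $\mathcal{R}=\mathcal{R}'$. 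Taking the inverse of $\sigma_0'\sigma_1'\sigma_2'=\mathrm{Id}$ and using that $\sigma_1'$ is an involution shows immediately that $\sigma_0\sigma_1\sigma_2=(\sigma_2')^{-1}\sigma_1'(\sigma_0')^{-1}=\mathrm{Id}$, so this is an admissible map; moreover the white vertices of $M'$, i.e.\ the cycles of $\sigma_0'$, become exactly the cycles of $\sigma_2$, i.e.\ the boundary faces of $M$, with the $i^{\textup{th}}$ white vertex corresponding to the $i^{\textup{th}}$ boundary face.

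With the dual map in hand, I would match the two combinatorial data governing the two constraints. The simplicity characterisation recalled above applies to $M$: its $i^{\textup{th}}$ boundary-edge set $\mathcal{B}_i$ is the $\sigma_2$-orbit of $e_i$, and the $i^{\textup{th}}$ boundary face of $M$ is simple if and only if $\sigma_0^{\partial_i}=\mathrm{Id}$. On the other side, the star constraint for $M'$ is governed by the set $\mathcal{B}_i'$, the $\sigma_0'$-orbit of $e_i$, and holds if and only if $(\sigma_2')^{\partial_i}=\mathrm{Id}$. Since $\sigma_2=(\sigma_0')^{-1}$ has the same orbits as $\sigma_0'$, I obtain $\mathcal{B}_i=\mathcal{B}_i'$ on the nose.

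It then remains to compare the two restricted permutations on this common set. Here $\sigma_0^{\partial_i}$ is obtained by deleting from the cycles of $\sigma_0=(\sigma_2')^{-1}$ all elements outside $\mathcal{B}_i=\mathcal{B}_i'$, whereas $(\sigma_2')^{\partial_i}$ is obtained by the same deletion applied to the cycles of $\sigma_2'$. The point to verify is that this cycle-deletion operation commutes with inversion, i.e.\ $(\sigma^{-1})^{\partial_i}=(\sigma^{\partial_i})^{-1}$: deleting the elements outside $\mathcal{B}_i'$ from a $\sigma$-cycle and from the reversed $\sigma^{-1}$-cycle yields mutually inverse cyclic orders on the surviving elements. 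Consequently $\sigma_0^{\partial_i}=\big((\sigma_2')^{\partial_i}\big)^{-1}$, and a permutation is trivial precisely when its inverse is. Therefore $\sigma_0^{\partial_i}=\mathrm{Id}$ if and only if $(\sigma_2')^{\partial_i}=\mathrm{Id}$, which by the two characterisations is exactly the assertion that the $i^{\textup{th}}$ boundary face of $M$ is simple if and only if the $i^{\textup{th}}$ white vertex of $M'$ satisfies the star constraint.

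I expect the main (and essentially only) obstacle to be bookkeeping around conventions: fixing the precise duality triple so that the product relation $\sigma_0\sigma_1\sigma_2=\mathrm{Id}$ is preserved, and checking the compatibility of cycle-deletion with inversion. Neither of these affects orbits or the vanishing of a restricted permutation, so the geometric content — a face surrounding the white vertex corresponds under duality to a vertex lying on the boundary face, and ``pairwise distinct faces'' dualises to ``pairwise distinct vertices'' — comes out cleanly once the model is set up.
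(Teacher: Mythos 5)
Your proof is correct and follows essentially the same route as the paper's: dualise in the permutation model by exchanging the vertex-rotation and face-rotation permutations, identify the boundary-edge set $\mathcal{B}_i$ of the dual with the star set $\mathcal{B}_i'$, and observe that the two restricted permutations are simultaneously trivial. The only difference is that you define the dual as $\big((\sigma_2')^{-1},\sigma_1',(\sigma_0')^{-1}\big)$ so that the convention $\sigma_0\circ\sigma_1\circ\sigma_2=\mathrm{Id}$ is genuinely preserved, which forces the (correct) extra observation that cycle-deletion commutes with inversion, whereas the paper swaps $\sigma_0'\leftrightarrow\sigma_2'$ without inverses and gets $\sigma_0^{\partial_i}=(\sigma_2')^{\partial_i}$ on the nose; your version is, if anything, the more careful treatment of the orientation convention.
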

\begin{proof}
Let $M'=(\sigma_2',\sigma'_1,\sigma_0'; \mathcal{R}')$ be a multi-ciliated map of type $(g,n)$. Define the map $M=(\sigma_0,\sigma_1,\sigma_2;\mathcal{R})$ $=(\sigma_2',\sigma'_1,\sigma_0'; \mathcal{R}')$ as the dual of $M'$. The white vertices of $M'$ correspond to the boundary faces of $M$; the black ones correspond to the internal faces of $M$. Then:
\begin{itemize}
	\item $M$ is of genus $g$ since $M'$ is of genus $g$;
	\item $M$ has $n$ labelled boundary faces since $M'$ has $n$ labelled white vertices;
	\item the boundary faces of $M$ are rooted: the root of the $i^{\textup{th}}$ boundary face is the dual oriented edge to the $i^{{\rm th}}$ root edge of $M'$.
\end{itemize}

The $i^{\textup{th}}$ white vertex of $M'$ satisfies the star constraint if and only if the permutation $(\sigma_2')^{\partial_i}$ is the identity permutation, \textit{i.e.}~the permutation $\sigma_0^{\partial_i}$ is the identity permutation. This is the property defining the simplicity of the $i^{\textup{th}}$ boundary face.
\end{proof}

\subsubsection{Uniqueness constraint}

In the permutation model, having the star and the uniqueness constraints simultaneously is characterised as follows. Define $\mathcal{B}' \subseteq \vec{\mathcal{E}}'$ to be the union of the $\sigma_0'$-orbits of $e_i$, for $i \in \{1,\ldots,n\}$.  Denote by $(\sigma_2')^{\partial} \in \mathfrak{S}(\mathcal{B}')$ the permutation obtained by expressing $\sigma_2' \in \mathfrak{S}(\vec{\mathcal{E}}')$ as a union of disjoint cycles and deleting those elements that do not lie in $\mathcal{B}'$. If $e \in \mathcal{B}'$ is an oriented edge adjacent to a marked face $f$, then $(\sigma_2')^{\partial}(e)$ is the next oriented edge in $\mathcal{B}'$ that is met when turning anticlockwise around $f$. Then the vertices of a multi-ciliated map satisfy the star and uniqueness constraints if and only if the permutation $(\sigma_2')^{\partial}$ is the identity permutation.

\begin{lem}[Uniqueness and star = dual of full simplicity]\label{lem:perm:fs}
The white vertices of a multi-ciliated map satisfy the uniqueness and star constraints if and only if the dual map is fully simple. \hfill $\star$
\end{lem}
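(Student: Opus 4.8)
The plan is to reuse, verbatim, the two characterisations already established in this subsection together with the permutation duality, mirroring the proof of the previous lemma (\emph{Star = dual of simplicity}) but with the global boundary sets $\mathcal{B}$, $\mathcal{B}'$ in place of the per-boundary sets $\mathcal{B}_i$, $\mathcal{B}_i'$. Concretely, I would start from a multi-ciliated map $M'=(\sigma_0',\sigma_1',\sigma_2';\mathcal{R}')$ with $\mathcal{R}'=(e_1,\ldots,e_n)$ and form its dual $M=(\sigma_0,\sigma_1,\sigma_2;\mathcal{R})=(\sigma_2',\sigma_1',\sigma_0';\mathcal{R}')$, exactly as in the previous lemma, so that $\sigma_0=\sigma_2'$ and $\sigma_2=\sigma_0'$ act on the common set $\vec{\mathcal{E}}=\vec{\mathcal{E}}'$ and $\mathcal{R}=\mathcal{R}'$. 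Under this duality the white vertices of $M'$ become the boundary faces of $M$, with matching roots, so that $M$ is an $n$-boundary map of the same genus.

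The heart of the argument is a single identification. By definition, the set $\mathcal{B}\subseteq\vec{\mathcal{E}}$ controlling full simplicity of $M$ is the union of the $\sigma_2$-orbits of the $e_i$; since $\sigma_2=\sigma_0'$ and $\mathcal{R}=\mathcal{R}'$, this is precisely the union of the $\sigma_0'$-orbits of the $e_i$, which is the set $\mathcal{B}'\subseteq\vec{\mathcal{E}}'$ appearing in the characterisation of the simultaneous star and uniqueness constraints. Hence $\mathcal{B}=\mathcal{B}'$. Consequently the two restricted permutations coincide: $\sigma_0^\partial$ is obtained from $\sigma_0=\sigma_2'$ by deleting from its cycle decomposition the elements outside $\mathcal{B}=\mathcal{B}'$, while $(\sigma_2')^\partial$ is obtained from $\sigma_2'$ by deleting the elements outside $\mathcal{B}'$; these are the same deletion applied to the same permutation, so $\sigma_0^\partial=(\sigma_2')^\partial$ as elements of $\mathfrak{S}(\mathcal{B})$. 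Chaining the two characterisations then gives the claim: $M$ is fully simple $\iff \sigma_0^\partial=\mathrm{Id}\iff (\sigma_2')^\partial=\mathrm{Id}\iff M'$ satisfies the star and uniqueness constraints.

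Once the two characterisations are granted, the statement is essentially a tautology, so I do not expect a genuine obstacle beyond careful bookkeeping. The one step that deserves explicit verification is the identity $\mathcal{B}=\mathcal{B}'$, i.e.\ that the $\sigma_2$-orbits of the roots in $M$ are exactly the $\sigma_0'$-orbits of the roots in $M'$; this is immediate from $\sigma_2=\sigma_0'$, but it is worth spelling out because it is the only place where the duality is actually used, and it is what upgrades the per-boundary equivalence of the previous lemma (star $\leftrightarrow$ simplicity of $\partial_i$) to the global equivalence incorporating uniqueness. Note also that the differing geometric descriptions of $\sigma_0^\partial$ (turning around a vertex) and $(\sigma_2')^\partial$ (turning around a marked face) need not be reconciled, since the equality of the two permutations is purely combinatorial and does not rely on matching those pictures.
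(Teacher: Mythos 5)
Your proof is correct and follows the same route as the paper: the paper's own (very terse) proof likewise forms the dual by swapping $\sigma_0'$ and $\sigma_2'$ and observes that $\sigma_0^{\partial} = (\sigma_2')^{\partial}$, so that the identity-permutation characterisations of full simplicity and of the star-plus-uniqueness constraints coincide. Your explicit verification that $\mathcal{B}=\mathcal{B}'$ simply spells out a step the paper leaves implicit.
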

\begin{proof}
As in the previous proof, the property that $\sigma_0^{\partial} = (\sigma_2')^{\partial}$ is the identity permutation matches the definition of full-simplicity.
\end{proof}

\section{Topological recursion for fully simple maps}

\label{S4}

We use multi-ciliated maps in order to prove that fully simple maps satisfy topological recursion. We first recall from \cite{BCEG-F} the topological recursion formula for ciliated maps; we then show how to use this result to prove that fully simple maps satisfy topological recursion; eventually, we discuss in greater detail the disc and the cylinder case.

\subsection{Topological recursion for ciliated maps}

If $f$ is a formal power series in $\alpha^{-1}$, we denote by $[f]_d$ the coefficient of $\alpha^{-d}$. We define the fol\-low\-ing spectral curve, which is a specialisation of the spectral curve for ciliated maps obtained in \cite{BCEG-F}.
\begin{lem} \cite{BCEG-F} \label{def:spec:curve}
There exists a unique polynomial $Q$ of degree $r$ with coefficients in $\mathbb{C}[[\alpha^{-1}]]$, as well as $a_j\in\mathbb{C}[[\alpha^{-1}]]$ indexed by $j\in\{1,\ldots,N\}$, satisfying:
\begin{equation*}
\begin{array}{lll}
Q(\zeta) \mathop{=}\limits_{\zeta \rightarrow \infty} V'\bigg(\zeta + \frac{1}{\alpha}\sum\limits_{j=1}^{N}\frac{1}{Q'(a_j)(\zeta-a_j)} \bigg)+\mathcal{O}(\zeta^{-1}),&\qquad& [Q(\zeta)]_0=V'(\zeta),\vspace{0.15cm} \\
Q(a_j) = V'(\lambda_j),&\qquad& [a_j]_0=\lambda_j.
\end{array}
\end{equation*}
\hfill $\star$ 
\end{lem}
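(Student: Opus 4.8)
The plan is to solve the system recursively in powers of $\alpha^{-1}$. Write $Q=\sum_{d\geq 0}\alpha^{-d}Q_d$ with each $Q_d$ a polynomial in $\zeta$ of degree at most $r$, and $a_j=\sum_{d\geq 0}\alpha^{-d}a_{j,d}$. The normalisation conditions $[Q(\zeta)]_0=V'(\zeta)$ and $[a_j]_0=\lambda_j$ force the leading order to be $Q_0=V'$ and $a_{j,0}=\lambda_j$, and both functional equations are satisfied at order $\alpha^0$: the correction $R(\zeta)\coloneqq \alpha^{-1}\sum_{j}\big(Q'(a_j)(\zeta-a_j)\big)^{-1}$ carries an explicit factor $\alpha^{-1}$, so the right-hand side of the first equation is $V'(\zeta)+\mathcal{O}(\zeta^{-1})$ at this order, while $Q_0(a_{j,0})=V'(\lambda_j)$ holds trivially. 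The entire content is then to show that, for each $d\geq 1$, the two equations determine $(Q_d,a_{j,d})$ uniquely from the lower-order data. I use throughout the standing genericity hypothesis that $\lambda_1,\ldots,\lambda_N$ are distinct and $V''(\lambda_j)\neq 0$; the latter makes $Q'(a_j)$ invertible in $\CC[[\alpha^{-1}]]$, so that $R$ is a well-defined formal Laurent series at $\zeta=\infty$ with simple poles at the $a_j$ and decay $\mathcal{O}(\zeta^{-1})$.

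First I would use the first equation to extract $Q_d$. Since $V'$ is a polynomial of degree $r$ and $R(\zeta)\to 0$ as $\zeta\to\infty$, the Taylor expansion $V'(\zeta+R(\zeta))=\sum_{m\geq 0}\tfrac{1}{m!}V^{(m+1)}(\zeta)R(\zeta)^m$ is a formal Laurent series at infinity whose nonnegative part is a polynomial of degree $r$ in $\zeta$, and the first equation asserts exactly that $Q$ equals this nonnegative part. Because $R=\mathcal{O}(\alpha^{-1})$ one has $R^m=\mathcal{O}(\alpha^{-m})$, so the coefficient of $\alpha^{-d}$ in $R^m$ is a sum over $m$ factors each of order at most $d-1$; hence the coefficient of $\alpha^{-d}$ in $V'(\zeta+R)$ depends on $Q$ and the $a_j$ only at orders $\leq d-1$. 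Taking its nonnegative part in $\zeta$ yields an explicit expression for $Q_d$ in terms of $\{Q_e,a_{j,e}\}_{e<d}$. Matching the $r+1$ coefficients of $\zeta^0,\ldots,\zeta^r$ pins down all $r+1$ coefficients of $Q_d$; since the corrections only affect coefficients of degree $\leq r-2$, the top two coefficients stay equal to those of $V'$ and $\deg Q=r$ is preserved.

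Next I would use the second equation to extract $a_{j,d}$. With $Q$ now known up to order $\alpha^{-d}$, I expand $Q(a_j)=V'(\lambda_j)$ to order $\alpha^{-d}$. The unknown $a_{j,d}$ enters the order-$d$ coefficient only through the linear term $Q_0'(a_{j,0})\,a_{j,d}=V''(\lambda_j)\,a_{j,d}$; every remaining contribution involves $Q_e$ with $e\leq d$ (already determined) and $a_{j,e}$ with $e<d$. Invertibility of $V''(\lambda_j)$ then gives a unique solution $a_{j,d}$. Thus at each step the first equation outputs $Q_d$ and the second outputs $a_{j,d}$, closing the induction and yielding both existence and uniqueness of $(Q,a_1,\ldots,a_N)$.

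The main point requiring care is the bookkeeping that makes this recursion triangular: I must verify that the explicit prefactor $\alpha^{-1}$ in $R$ genuinely pushes all of $R$'s dependence on the unknowns to strictly lower orders, so that $Q_d$ is computable before $a_{j,d}$ and no order-$d$ datum is ever needed to produce order-$d$ data. The only substantive hypotheses are the invertibility of $V''(\lambda_j)$ and the distinctness of the $\lambda_j$ (which keeps the $a_j$ distinct and preserves the simple-pole form of $R$); granting these, the argument is a routine order-by-order solution with no further obstruction.
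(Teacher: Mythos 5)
This lemma is imported verbatim from \cite{BCEG-F}; the present paper gives no proof of it, so there is no in-paper argument to compare yours against. Judged on its own terms, your proof is correct, and it is the standard (surely also the original) way to establish such formal fixed-point statements: the bookkeeping that makes the recursion triangular is exactly right, namely the explicit prefactor $\alpha^{-1}$ in $R$ forces $\big[V'(\zeta+R)\big]_d$ to depend only on $\{Q_e,a_{j,e}\}_{e\leq d-1}$, taking the polynomial part in $\zeta$ then outputs $Q_d$ (with degree $\leq r-2$ for $d\geq 1$, since the $m$-th Taylor term behaves as $\zeta^{r-2m}$, so $\deg Q=r$ is preserved), and the constraint $Q(a_j)=V'(\lambda_j)$ at order $d$ is affine in $a_{j,d}$ with slope $Q_0'(a_{j,0})=V''(\lambda_j)$, giving $a_{j,d}$ uniquely. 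One point deserves emphasis: the hypothesis $V''(\lambda_j)\neq 0$ that you introduce is absent from the statement, but it is genuinely necessary rather than a convenience --- if $V''(\lambda_j)=0$ then $Q'(a_j)$ has vanishing leading coefficient, hence is not invertible in $\mathbb{C}[[\alpha^{-1}]]$, and the first equation does not even make sense, so existence fails; it is therefore an implicit genericity assumption of the lemma. In the only specialisation used in this paper, $\Lambda=\{0\}$ with $V''(0)=1$, it holds automatically. By contrast, the distinctness of the $\lambda_j$ plays no role in the recursion (it is anyway built into $\Lambda$ being a set), so that part of your standing hypothesis could be dropped.
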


\bd
We introduce $\zeta\in\mathbb{C}[[\alpha^{-1}]]$, the unique formal power series whose coefficients are rational functions of $z$ determined by:
\begin{equation}
\label{Qzeta} Q(\zeta) = V'(z),\qquad [\zeta]_0 = z. 
\end{equation}
The \emph{spectral curve} for the weighted enumeration of ciliated maps is $\mathcal{S}=(\mathbb{P}^1,x,y,\Gamma_{0,2})$, where the meromorphic maps $x,y\,\colon\, \mathbb{P}^1\to \mathbb{P}^1$ and the bidifferential $\Gamma_{0,2}$ are defined by:
\begin{equation*}
\begin{cases}
x(\zeta)= Q(\zeta), \\
y(\zeta)= \alpha\,\zeta +\sum\limits_{j=1}^N \frac{1}{Q'(a_j)(\zeta-a_j)}, \\
\Gamma_{0,1}(\zeta)=y(\zeta)\dd x(\zeta), \vspace{0.1cm} \\
\Gamma_{0,2}(\zeta_1,\zeta_2) = \frac{\dd\zeta_1 \, \dd\zeta_2}{(\zeta_1-\zeta_2)^2}.
\end{cases}
\end{equation*}\hfill
$\star$
\ed
Here, $\zeta$ is considered as a uniformising coordinate on the underlying $\mathbb{P}^1$, and $\Gamma_{0,2}$ is the unique fundamental bidifferential of the second kind on $\mathbb{P}^1$ (called \emph{standard bidifferential} for short). To be precise, $\mathcal{S}$ is a family of spectral curves parameterised by the formal parameter $\alpha^{-1}$ and the complex parameters $\lambda_1,\ldots,\lambda_N,t_3,\ldots,t_r$. 

\medskip

For $g \geq 0$ and $n \geq 1$ such that $2g-2+n \geq 0$, we define the $n$-differential
\begin{eqnarray}\label{gammaTR}
\Gamma_{g,n}(\zeta_1,\ldots,\zeta_n) &=& \bigg(C_{g,n}(z_1,\ldots,z_n) + \frac{\delta_{g,0}\delta_{n,2}}{\big(x(\zeta_1) - x(\zeta_2)\big)^2}\bigg)\dd x(\zeta_1)\cdots \dd x(\zeta_n)\cr
&& +\,\,\delta_{g,0}\delta_{n,1}\bigg(\alpha\,z_1+\sum\limits_{j=1}^{N}\frac{1}{V'(z_1)-V'(\lambda_j)}\bigg)\dd x(\zeta_1),
\end{eqnarray}
where $\zeta_i$ and $z_i$ are related as in Definition~\ref{Qzeta}. It was proved in \cite{BCEG-F} that ciliated maps satisfy topological recursion for this curve.

\begin{thm} \cite{BCEG-F} \label{thm:tr:ciliated}
If $\lambda_1,\ldots,\lambda_N$ and $t_3,\ldots,t_{r+1}$ are chosen so that the polynomial $Q'(\zeta)$ has simple roots (this is a generic condition), then the differentials $\Gamma_{g,n}$ can be analytically continued to meromorphic $n$-forms on the spectral curve, still denoted $\Gamma_{g,n}$. Then, $\Gamma_{0,2}$ is then the standard bidifferential on the spectral curve, and $\Gamma_{g,n}$ for $2g - 2 + n > 0$ satisfy the topological recursion for the spectral curve of Definition \ref{def:spec:curve}. The generating series $C_{g,n}$ are retrieved by expansion when $z_i \rightarrow \infty$. \hfill $\star$
\end{thm}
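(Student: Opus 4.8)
The plan is to prove the statement in the by-now standard way that combinatorial map models are shown to obey topological recursion: one establishes the \emph{abstract loop equations} for the correlators $\Gamma_{g,n}$ together with control of their pole structure, and then invokes the reconstruction theorem (``abstract loop equations imply topological recursion'') to conclude, as is done in \cite{BCEG-F}. The whole argument lives on the uniformising $\PP^1$ with coordinate $\zeta$, the passage between the combinatorial expansion variable $z$ and the spectral-curve coordinate being governed by $Q(\zeta)=V'(z)$ from \eqref{Qzeta}.

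First I would derive a Tutte-type recursion for the series $C_{g,n}$ by analysing the univalent white vertex carrying the first cilium. Following the edge incident to this vertex, its other endpoint is either a black vertex --- producing, after summing over the decoration $U$ of the adjacent unmarked faces, the vertex weight $\mathscr{V}_d$ controlled by $V'$ --- or it meets the first marked face elsewhere, producing either a splitting of the face (a product of two lower correlators, together with the $(0,2)$ contribution) or a drop in genus. The propagator $\mathscr{P}$ and the residue weights $\mathscr{V}_d$ of Definition~\ref{def:potential} are engineered precisely so that this edge-removal closes into a Schwinger--Dyson equation expressing $C_{g,n}$ through sums of $C_{g',n'}$ and their products.

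Second I would translate this combinatorial recursion into analytic loop equations on $\mathcal S$, and I expect this to be the main obstacle. The key point is that the fixed-point equations defining $Q$ and the $a_j$ in Lemma~\ref{def:spec:curve} are exactly the conditions that resolve the one-point sector: they force $\Gamma_{0,1}=y\,\dd x$ with the stated $y$ and they locate the extra simple poles of $y$ at $\zeta=a_j$. With this identification the Schwinger--Dyson equation becomes, at each zero $a$ of $Q'$, the assertion that $\Gamma_{g,n}(\zeta,\cdot)+\Gamma_{g,n}(\iota_a(\zeta),\cdot)$ is holomorphic at $\zeta=a$ (the linear loop equation, $\iota_a$ being the local deck involution of $x=Q$) and that the corresponding quadratic combination is holomorphic there (the quadratic loop equation). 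The delicate bookkeeping is the sum over decorations $U\colon\mathcal F^\circ\to\Lambda$: one must show that the only singularities it produces in the $\zeta$-variable are the controlled simple poles at $\zeta=a_j$ and at the ramification points, which is exactly what the defining equations for $Q$ and $a_j$ guarantee. The hypothesis that $Q'$ has simple roots ensures all ramification points of $x$ are simple, so the involutions $\iota_a$ are well-defined and the standard form of the loop equations applies.

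Finally I would supply the global analytic input for the reconstruction theorem: for $2g-2+n>0$ the $\Gamma_{g,n}$ extend to meromorphic $n$-forms with poles only at the ramification points and no residues, while $\Gamma_{0,2}$ is the standard bidifferential $\dd\zeta_1\,\dd\zeta_2/(\zeta_1-\zeta_2)^2$ by definition. These three ingredients --- linear and quadratic loop equations plus the pole structure --- are precisely the hypotheses of the reconstruction theorem, which then yields that the $\Gamma_{g,n}$ are computed by topological recursion for $\mathcal S$. The series $C_{g,n}$ are recovered by expanding $\Gamma_{g,n}$ as $\zeta_i\to\infty$, where $x=Q\to\infty$ and the coefficients in the face-degree grading reproduce the generating series of Definition~\ref{def:generating:functions}.
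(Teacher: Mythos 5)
This theorem is not proved in the paper at all: it carries the citation \cite{BCEG-F} in its header and is imported as a black box, so there is no in-paper argument to compare yours against line by line. Your outline --- a Tutte/Schwinger--Dyson equation obtained by analysing the edge at the univalent white vertex, translation into linear and quadratic abstract loop equations on the cover $x = Q(\zeta)$, then the reconstruction theorem ``loop equations plus pole structure imply topological recursion'' --- is the standard route, and it is indeed the strategy of the cited work, so the architecture is sound.

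As a proof, however, what you wrote is a plan in which every step carrying real content is asserted rather than executed. Concretely: (i) the Schwinger--Dyson equation for ciliated maps is delicate because of the uniqueness constraint (each face surrounds at most one white vertex) and the sum over decorations $U\colon \mathcal{F}^{\circ} \to \Lambda$; removing the cilium edge can merge the marked face with an unmarked face or with another marked face, and one must verify that the weights $\mathscr{P}$ and $\mathscr{V}_d$ of Definition~\ref{def:potential} really close this into a functional equation --- you state this is ``engineered precisely so'' but never check it. (ii) The identification $\Gamma_{0,1} = y\,\dd x$ with $y(\zeta) = \alpha\zeta + \sum_{j} \bigl(Q'(a_j)(\zeta - a_j)\bigr)^{-1}$, i.e.\ that the fixed-point system of Lemma~\ref{def:spec:curve} ``resolves the one-point sector'', is itself a nontrivial theorem (also only cited, not proved, in this paper); you take it as given. (iii) The analytic input for the reconstruction theorem --- meromorphic continuation of $\Gamma_{g,n}$ with poles only at the ramification points and vanishing residues, and the statement that the decoration sum produces no other singularities --- is exactly the hard part you flag as ``the main obstacle'', and it remains an expectation in your text rather than an argument. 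None of these steps is wrong, but none is carried out, so your proposal certifies the shape of the proof in \cite{BCEG-F} rather than supplying a proof.
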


The roots of the polynomial $Q'(\zeta)$ are the \emph{ramification points} of the spectral curve. Given $\zeta_0$, we define $\{\zeta_0^{(0)},\zeta_0^{(1)},\ldots,\zeta_0^{(r-1)}\}$ as the set of roots of $Q(\zeta) -Q(\zeta_0)$, where $\zeta_0^{(0)}=\zeta_0$.  For generic parameters, the branched cover $x$ has $r-1$ simple ramification points $b_1,\ldots,b_{r-1}$, \textit{i.e.}
$$
Q'(b_k)=0,\qquad Q''(b_k) \neq 0,
$$
and hence the theorem applies. For $\zeta$ near $\rho_k$, we can always choose the labellings of points in $x^{-1}(x(\zeta))$ so that $\rho_k = \rho_k^{(k)}$; since the ramification points are simple, we have $\rho_k^{(l)} \neq \rho_k$ for $l \neq 0,k$.  To each ramification point we associate the recursion kernel:
\[K_{\rho_k} (\zeta_1,\zeta) = \frac{1}{2}\,\frac{\int_{\zeta^{(k)}}^{\zeta} \Gamma_{0,2}(\zeta_1,\cdot)}{\Gamma_{0,1}(\zeta)- \Gamma_{0,1}(\zeta^{(k)})}\;, \] 
which is defined locally for $\zeta$ near $\rho_k$ and globally for $\zeta_1 \in \mathbb{P}^1$. The topological recursion formula allows the computation of $\Gamma_{g,n}$ by induction on $2g-2+n > 0$:
\begin{equation}
\label{gammaTR1}
\Gamma_{g,n}(\zeta_1,\ldots,\zeta_n) = \sum\limits_{k=1}^{r-1}\underset{\zeta=\rho_k}{\Res}K_{\rho_k}(\zeta_1,\zeta)\Bigg(\Gamma_{g-1,n+1}(\zeta,\zeta^{(k)},I)+\!\!\!\!\sum\limits_{\substack{h+h'=g\\ J\sqcup J' =I}}^{'} \!\!\!\!\Gamma_{h,1+\#J}(\zeta,J)\,\Gamma_{h',1+\#J'}(\zeta^{(k)},J')\Bigg),
\end{equation}
where $I = \{\zeta_2,\ldots,\zeta_n\}$ and $\sum'$ means that terms involving $\Gamma_{0,1}$ should be excluded from the sum.

\subsection{Relating fully simple and ciliated generating series}

The following key observation relates the enumeration of fully simple maps to the one of multi-ciliated maps. If $k \geq 0$, let $[0^{k}]$ be the $k$-tuple whose elements are all zero. \textit{From now on we set $\Lambda = \{0\}$, \textit{i.e.}~$N = 1$ and $\lambda_1 = 0$.}

\begin{lem}
If we choose $\Lambda = \{0\}$, we have for any $g \geq 0$, $n \geq 1$ and $k_1,\ldots,k_n \geq 0$:
\begin{equation}\label{eq:multi:fs}
\mathrm{FSMap}_{g;(k_1,\ldots,k_n)} = \left. S_{g;(k_1,\ldots,k_n)}\big([0^{k_1}],\ldots,[0^{k_n}]\big)\right|_{\lambda_1 = 0} +\delta_{g,0}\delta_{n,1}\delta_{k_1,0} \alpha. 
\end{equation}
\hfill $\star$
\end{lem}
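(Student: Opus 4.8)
The plan is to evaluate the multi-ciliated weight of Definition~\ref{def:wcil} at the specialisation $\Lambda=\{0\}$, $\underline{Z}=\big([0^{k_1}],\ldots,[0^{k_n}]\big)$, and then transport the resulting enumeration through the duality of Lemma~\ref{lem:perm:fs}. First I would observe that with $\Lambda=\{0\}$ the only admissible decoration is $U\equiv 0$ on $\mathcal{F}^\circ$, so the sum over $U$ in $\mathscr{W}_{\rm cil}$ collapses to a single term; since all marked faces are then decorated by $z_{i,j}=0$ as well, every face of $M'$ carries the value $0$, and it suffices to evaluate the local weights of Definition~\ref{def:potential} at $0$.

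The key computation is this evaluation. Because $V'(u)=u-\sum_{j=3}^{r+1}t_j u^{j-1}$ vanishes to first order at $0$ with $V''(0)=1$, the propagator extends to $\mathscr{P}(0,0)=1/V''(0)=1$, so every edge contributes $1$; no pole arises, consistent with $\mathscr{V}_d$ and $\mathscr{P}$ being regular when the arguments coincide. For a black vertex of degree $d\in\{3,\ldots,r+1\}$ one reads off the coefficient of $u^{-1}$ to get
\[
\mathscr{V}_d(0,\ldots,0)=\Res_{u=\infty}\frac{V'(u)\,\dd u}{u^d}=t_d .
\]
Since white vertices have weight $1$, the specialised weight of a multi-ciliated map $M'$ is exactly $\prod_{v\,\mathrm{black}}t_{\deg(v)}$.

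Next I would transport this through duality. By Lemma~\ref{lem:perm:fs} the elements of $\mathbf{S}_{g;(k_1,\ldots,k_n)}$ are precisely the duals of fully simple maps $M$ of genus $g$ with $n$ boundary faces, the $i^{\rm th}$ white vertex (of degree $k_i$) corresponding to $\partial_i M$ (so $\deg(\partial_i M)=k_i$) and the black vertices corresponding to internal faces of matching degrees in $\{3,\ldots,r+1\}$; hence $\prod_{v\,\mathrm{black}}t_{\deg(v)}=\prod_{d}t_d^{f_d(M)}$. Duality further gives $\#\mathcal{F}(M')=\#\mathcal{V}(M)$ and preserves the genus, so by Definition~\ref{def:degree} one has $\deg M'=2g-2+\#\mathcal{F}(M')=2g-2+\#\mathcal{V}(M)$ and therefore $\alpha^{-\deg M'}=\alpha^{2-2g-\#\mathcal{V}(M)}$. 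As $\#\mathrm{Aut}(M)=1$, this is exactly the prefactor in \eqref{def:Wscr}, whence $\alpha^{-\deg M'}\mathscr{W}_{\rm cil}(M')\big|_{\rm spec}=\mathscr{W}(M)$. Summing over $M'\in\mathbf{S}_{g;(k_1,\ldots,k_n)}$ and using Definition~\ref{def:generating:functions} then yields $S_{g;(k_1,\ldots,k_n)}\big([0^{k_1}],\ldots,[0^{k_n}]\big)\big|_{\lambda_1=0}=\sum_M\mathscr{W}(M)$, summed over all fully simple maps with the prescribed boundary degrees.

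The only point requiring care, and the one I expect to be the main obstacle, is the degenerate stratum. The map reduced to a single vertex (genus $0$, one boundary of degree $0$, weight $\alpha$) is included in $\mathrm{FSMap}_{0;(0)}$ by the convention $\mathrm{Map}_{0;(0)}=\alpha$, yet it has no multi-ciliated dual: its dual would require a white vertex of degree $0$, which is forbidden by Definition~\ref{def:const:vertices}. This single missing contribution is exactly what the term $\delta_{g,0}\delta_{n,1}\delta_{k_1,0}\,\alpha$ restores, and for all other tuples with some $k_i=0$ both sides vanish by the boundary-length convention, so no further correction is needed; the remaining work is the routine identification of weights sketched above.
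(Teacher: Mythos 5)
Your proposal is correct and takes essentially the same route as the paper's own proof: both rest on the duality of Lemma~\ref{lem:perm:fs}, the evaluations $\mathscr{P}(0,0)=1$ and $\mathscr{V}_d(0,\ldots,0)=t_d$ of the local weights, the identification $\alpha^{-\deg M'}=\alpha^{2-2g-\#\mathcal{V}(M)}$ with the prefactor of $\mathscr{W}(M)$, and the separate treatment of the degenerate single-vertex disc (which the paper handles in the remark following the lemma, exactly as you do).
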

\br
The additional term for the disc case comes from the degenerate fully simple map in $\mathbf{M}_{0,1}(1)$, which has no equivalent among multi-ciliated maps.
\hfill $\star$
\er
\begin{proof} From Lemma \ref{lem:perm:fs}, multi-ciliated maps are dual to fully simple maps --- we rigorously characterised this correspondence in Section~\ref{staruni}. The perimeter of the $i^{\textup{th}}$ boundary face of a fully simple map $M\in\mathbf{M}_{g,n}$ corresponds to the degree of the $i^{\textup{th}}$ white vertex of $M' \in\mathbf{S}_{g;(k_1,\ldots,k_n)}$, \textit{i.e.}~$\deg(\partial_i M)=k_i$. We shall now compare the weights in their enumeration. Recall the Definition~\ref{def:potential} for the potential $V$ and $\mathscr{V}_d$, especially its expression via a residue. We observe that 
\begin{equation*}
\begin{split} 
\left. \mathscr{V}_d (u_1,\ldots,u_d)\right|_{u_k =0} & = t_d,\qquad d \in \{3,\ldots,r + 1\}, \\
\left. \mathscr{P}(u_1,u_2)\right|_{u_k=0}& = 1\,.
\end{split} 
\end{equation*} 
This can be used to evaluate the weight $\mathscr{W}_{\rm cil}(M')$ of a multi-ciliated map  $M'$ at $u_i = 0$ --- recall that the $u$s are either equal to $\lambda_1$ or to the $z$s, which are in the present situation all set to zero. Thus, the local weight $t_d$ for a black vertex of degree $d$ in the multi-ciliated map $M'$ can be interpreted as a local weight for an internal face of degree $d$ in the dual fully-simple map $M$. Unlike $\mathscr{W}_{\rm cil}(M')$ in Definition~\ref{def:wcil}, the weight $\mathscr{W}(M)$ introduced in \eqref{def:Wscr} contains a factor
$$
\alpha^{2 - 2g(M) - \#\mathcal{V}(M)} = \alpha^{2 - 2g(M') - \#\mathcal{F}(M')} = \alpha^{-\deg M'}.
$$
We deduce that this specialisation retrieves the weights for the standard notion of maps, in the form
$$
\left. \alpha^{-\deg M'}\,\mathscr{W}_{{\rm cil}}(M')\right|_{u_k = 0} = \mathscr{W}(M).
$$
\end{proof}
\begin{lem}\label{lem:fs:cil:generating}
For $i \in \{1,\ldots,n\}$, we introduce $z_i$ as the formal power series in $\alpha^{-1}$ and $w_i$ uniquely determined by
\[
\begin{cases}
V'(z_i) = \frac{w_i}{\alpha}\,,\\
z_i = \frac{w_i}{\alpha} + \mathcal{O}(\alpha^{-2})\,.
\end{cases}
\]
Then, for any $g \geq 0$ and $n \geq 1$:
\begin{equation}\label{eq:fs:cil}
X_{g,n}(w_1,\ldots,w_n)= C_{g,n}(z_1,\ldots,z_n)|_{\lambda_1 = 0} + \delta_{g,0}\delta_{n,1} \,\alpha \left(\frac{1}{w_1}+ z_1\right).
\end{equation}
\hfill $\star$
\end{lem}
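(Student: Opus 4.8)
The plan is to chain together the two structural results already at hand: the evaluation \eqref{eq:multi:fs} of fully simple enumerations in terms of multi-ciliated generating series evaluated at zero, and the formula \eqref{eq:multicil:to:cil:3} of Lemma~\ref{lem:multicil:to:cil:1} expressing multi-ciliated series through the ciliated series $C_{g,n}$. First I would insert \eqref{eq:multi:fs} into the defining sum $X_{g,n}(w_1,\ldots,w_n) = \sum_{k_1,\ldots,k_n\ge 0}\mathrm{FSMap}_{g;(k_1,\ldots,k_n)}\prod_i w_i^{k_i-1}$, splitting it into a main sum over $k_1,\ldots,k_n\ge 1$ of $S_{g;(k_1,\ldots,k_n)}\big([0^{k_1}],\ldots,[0^{k_n}]\big)\big|_{\lambda_1=0}\prod_i w_i^{k_i-1}$ and the single degenerate contribution $\delta_{g,0}\delta_{n,1}\delta_{k_1,0}\alpha$, which produces exactly the term $\delta_{g,0}\delta_{n,1}\alpha/w_1$ appearing in \eqref{eq:fs:cil}. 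It then remains to identify the main sum with $C_{g,n}(z_1,\ldots,z_n)|_{\lambda_1=0}+\delta_{g,0}\delta_{n,1}\alpha z_1$.

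The heart of the matter is to control \eqref{eq:multicil:to:cil:3} in the confluent limit $z_{i,j}\to 0$. Setting all $z_{i,j}=0$ naively makes the denominators $\prod_{l_i\neq j_i}\big(V'(0)-V'(0)\big)$ vanish, so I would keep the $z_{i,j}$ distinct and recognise the structure of divided differences. Writing $v=V'(z)$ and, near $v=0$, denoting by $z(v)$ the local inverse of $V'$ (which exists since $V'(0)=0$ and $V''(0)=1$), and setting $\Phi(z_1,\ldots,z_n)\coloneqq C_{g,n}(z_1,\ldots,z_n)+\delta_{g,0}\delta_{n,1}\alpha z_1$, the multi-sum in \eqref{eq:multicil:to:cil:3} is precisely the iterated divided difference of $\Phi$, taken in each slot in the variable $v_i=V'(z_i)$, on the nodes $\{V'(z_{i,j})\}_{j}$. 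As all nodes in each slot tend to $0$, the standard confluence property of divided differences gives
\[
S_{g;(k_1,\ldots,k_n)}\big([0^{k_1}],\ldots,[0^{k_n}]\big) = \frac{1}{\alpha^{\sum_i(k_i-1)}}\prod_{i=1}^n\frac{1}{(k_i-1)!}\,\partial_{v_i}^{\,k_i-1}\Big|_{v_i=0}\Phi\big(z(v_1),\ldots,z(v_n)\big).
\]

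Summing over $k_1,\ldots,k_n\ge 1$ against $\prod_i w_i^{k_i-1}$ then collapses by the multivariate Taylor formula: each slot contributes $\sum_{k_i\ge 1}\frac{1}{(k_i-1)!}\partial_{v_i}^{k_i-1}\big|_{0}(\cdot)(w_i/\alpha)^{k_i-1}$, that is, the Taylor series of $\Phi$ evaluated at $v_i=w_i/\alpha$. Since $w_i/\alpha=V'(z_i)$ by the defining relation of $z_i$, and $z\big(V'(z_i)\big)=z_i$ by the normalisation $z_i=w_i/\alpha+\mathcal{O}(\alpha^{-2})$, this resummation equals $\Phi(z_1,\ldots,z_n)=C_{g,n}(z_1,\ldots,z_n)+\delta_{g,0}\delta_{n,1}\alpha z_1$. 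Restricting to $\lambda_1=0$ and reinstating the degenerate disc term isolated in the first paragraph yields \eqref{eq:fs:cil}.

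The main obstacle is to legitimise these manipulations at the level of formal power series in $\alpha^{-1}$, which requires in particular that $C_{g,n}$ be regular at $z_i=0$ so that the Taylor expansion in $v_i$ makes sense. I would argue this from Theorem~\ref{thm:tr:ciliated}: the locus $z_i=0$ corresponds to $\zeta_i=a_1$, which for generic parameters is a simple pole of $y$ but not a ramification point, so $\Gamma_{g,n}$ is holomorphic there; since $\dd x=Q'(\zeta)\,\dd\zeta$ with $Q'(a_1)\neq 0$, the series $C_{g,n}$ inherits regularity at $z_i=0$ for $2g-2+n>0$ and for the cylinder. The disc case requires the separate check that the simple pole of $y$ at $a_1$ cancels exactly against the pole of $\alpha z_1+1/V'(z_1)$, so that $C_{0,1}+\alpha z_1$ is regular at $z_1=0$, which matches the very term entering $\Phi$. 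Granting this regularity order by order in $\alpha^{-1}$, both the confluent limit and the interchange of $\sum_{k_i}$ with it are justified because, thanks to the prefactor $\alpha^{-\sum_i(k_i-1)}$ together with $C_{g,n}=\mathcal{O}\big(\alpha^{-(2g-2+n)}\big)$, only finitely many $\underline{k}$ contribute to each fixed power of $\alpha^{-1}$, making every step an identity of genuine formal power series.
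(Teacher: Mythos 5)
Your proof is correct and follows essentially the same route as the paper's: insert \eqref{eq:multi:fs} into the definition of $X_{g,n}$, evaluate Lemma~\ref{lem:multicil:to:cil:1} at coinciding points $z_{i,j}=0$ --- which the paper, exactly like you, interprets as the iterated derivatives $\partial^{k_i-1}/\partial V'(z_i)^{k_i-1}$ (your divided-difference confluence) --- and then resum the resulting Taylor series via the change of variable $\alpha V'(z_i)=w_i$, treating the disc case separately. The only difference is that you make explicit the regularity of $C_{g,n}$ at $z_i=0$ and the formal-series bookkeeping, which the paper leaves implicit (and which can be seen more directly from $\mathscr{P}(0,0)=1$ and finiteness of $\mathbf{C}_{g,n}^{\delta}$, without invoking Theorem~\ref{thm:tr:ciliated} and its genericity assumption).
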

\begin{proof}
From the definition of $X_{g,n}$ and Equation~\eqref{eq:multi:fs}, we have
\begin{equation}\label{eq:proof:tr:fs}
\begin{split}
X_{g,n}(w_1,\ldots,w_n) &= \sum\limits_{k_1,\ldots, k_n \geq 1} w_1^{k_1-1}\cdots w_n^{k_n-1}\, \mathrm{FSMap}_{g;(k_1,\ldots,k_n)} + \delta_{g,0}\delta_{n,1} \frac{\alpha}{w_1}  \\
&= \sum\limits_{k_1,\ldots, k_n \geq 1} w_1^{k_1-1}\cdots w_n^{k_n-1} \,\left.S_{g;(k_1,\ldots,k_n)}\big([0^{k_1}],\ldots,[0^{k_n}]\big)\right|_{\lambda_1 = 0} + \delta_{g,0}\delta_{n,1} \frac{\alpha}{w_1}.
\end{split}
\end{equation}

Now, specifying all the $z_{i,j}=0$ in the Lemma~\ref{lem:multicil:to:cil:1} relating multi-ciliated and ciliated generating series, we obtain:
\begin{equation}\label{eq:proof:tr:fs:2}
\begin{split} 
& \quad \left.S_{g;(k_1,\ldots,k_n)}\big([0^{k_1}],\ldots,[0^{k_n}]\big)\right|_{\lambda_1 = 0} \\
&= \left.\frac{\alpha^{n-(k_1+\ldots+k_n)}}{(k_1-1)! \cdots (k_n-1)!} \frac{\partial^{k_1-1}}{\partial V'(z_1)^{k_1-1}}\cdots \frac{\partial^{k_n-1}}{\partial V'(z_n)^{k_n-1}} C_{g,n}(z_1,\ldots,z_n) \right|_{\substack{\lambda_1 = 0 \\ z_i = 0}} \\
& \quad + \delta_{g,0}\delta_{n,1} \frac{1}{\alpha^{k_1-1}} \left.\frac{\partial^{k_1-1}}{\partial V'(z_1)^{k_1-1}} \left(\alpha\, z_1 \right)\right|_{z_1=0}.
\end{split} 
\end{equation}

It is then natural to introduce the change of formal variable
$$
\alpha V'(z_i) = w_i\qquad {\rm such}\,\,{\rm that}\qquad  z_i = \frac{w_i}{\alpha} + \mathcal{O}(\alpha^{-2}),
$$
which is easily seen to exist and to be unique.

\medskip

\textbf{Case $(g,n)\neq (0,1)$.} Inserting Equation~\eqref{eq:proof:tr:fs:2} in the formula \eqref{eq:proof:tr:fs}, we recognise $X_{g,n}$ as a Taylor expansion of $C_{g,n}$ around 0:
\begin{equation*}
\begin{split}
& \quad X_{g,n}(w_1,\ldots,w_n) \\
&= \sum\limits_{k_1,\ldots, k_n \geq 1}  \left.\frac{w_1^{k_1-1}\cdots w_n^{k_n-1}}{(k_1-1)! \cdots (k_n-1)!} \frac{\partial^{k_1-1}}{\partial (\alpha V)'(z_1)^{k_1-1}}\cdots \frac{\partial^{k_n-1}}{\partial(\alpha V)'(z_n)^{k_n-1}} C_{g,n}(z_1,\ldots,z_n) \right|_{\substack{\lambda_1 = 0 \\ z_i = 0}} \\
&=\!\!\!\! \sum\limits_{k_1,\ldots,k_n\geq 0} \left.\left[\prod\limits_{i=1}^{n} \frac{w_i^{k_i}}{k_i!}  \frac{\partial^{k_i}}{\partial w_i^{k_i}}\right]C_{g,n}(z_1,\ldots,z_n) \right|_{\substack{\lambda_1 = 0 \\ z_i = 0}} \\
&= \left. C_{g,n}(z_1,\ldots,z_n)\right|_{\lambda_1 = 0}.
\end{split}
\end{equation*}
%where we added back the $k_i = 0$ terms which are in fact zero. 
The equalities hold as formal series in $\alpha^{-1}$ and $w_1,\ldots,w_n$. 

\medskip

\textbf{Case of the disc.}  The only difference is the presence of the extra term in Equation~\eqref{eq:proof:tr:fs}, corresponding to the degenerate fully simple map in $\mathbf{M}_{0,1}(1)$, and the presence of an extra term in Lemma~\ref{lem:multicil:to:cil:1} relating ciliated and multi-ciliated generating series:
$$
S_{0,(k)}(z_{1,1},\ldots,z_{1,k}) = \frac{1}{\alpha^{k - 1}} \sum_{j = 1}^{k} \frac{C_{0,1}(z_{1,j}) + \alpha z_{1,j}}{\prod\limits_{\substack{l = 1 \\ l \neq j}}^{k} \big(V'(z_{1,j}) - V'(z_{1,l})\big)}.
$$
Similarly to the previous computation, we obtain:
\begin{equation*}
\begin{split}
X_{0,1}(w_1) &= \frac{\alpha}{w_1}+ \sum\limits_{k_1\geq 1} \frac{w_1^{k_1 - 1}}{(k_1 - 1)!}\left. \frac{\partial^{k_1 - 1}}{\partial (\alpha V)'(z_1)^{k_1- 1}}\left(C_{0,1}(z_1)+\alpha\, z_1\right) \right|_{\substack{\lambda_1 = 0 \\ z_1=0}} \\
&= \frac{\alpha}{w_1} + \left.C_{0,1}(z_1)\right|_{\lambda_1 = 0} + \alpha\, z_1.
\end{split}
\end{equation*}
which is an identity between formal Laurent series in $\alpha^{-1}$ and formal power series in $w_1$.  
\end{proof}

\br
\label{rem:addu}The additional term of Lemma~\ref{lem:multicil:to:cil} for the case $(g,n,\underline{k})=(0,1,(2))$ comes from the term 
$$
\left.\mathscr{P}(z_{1,1},z_{1,2})\right|_{z_{1,1}= z_{1,2} = 0}=\mathscr{P}(0,0)=1,
$$ 
which corresponds to the special multi-ciliated map, dual to the degenerate fully simple map in $\mathbf{M}_{0,1}(2)$:
\begin{center}
\begin{tikzpicture}[ipe stylesheet] \useasboundingbox (48, 736) rectangle (396, 832); \node at (112, 800) {$z_{1,1}=0$}; \draw (112, 800) circle[radius=24]; \filldraw[fill=white] (88, 800) circle[radius=2]; \node at (112, 764) {$z_{1,2}=0$}; \node at (112, 748) {Multi-ciliated map}; \node at (304, 748) {Corresponding fully simple map}; \draw (272, 800) -- (320, 800); \filldraw[fill=black] (320, 800) circle[radius=1]; \filldraw[fill=black] (272, 800) circle[radius=1]; \draw[stealth-stealth, dashed] (172, 800) -- (196, 800); \end{tikzpicture}
\end{center}
This special case yields the additional term of Lemma~\ref{lem:multicil:to:cil:1} and hence of \eqref{eq:proof:tr:fs:2}.
\hfill $\star$
\er

For $g \geq 0$ and $n \geq 1$, we define the specialisation of the $n$-differential \eqref{gammaTR} to $\Lambda = \{0\}$:
$$ 
\chi_{g,n}(\zeta_1,\ldots,\zeta_n) \coloneqq \left.\Gamma_{g,n}(\zeta_1,\ldots,\zeta_n)\right|_{\lambda_1 = 0}.
$$
As a consequence of Lemma~\ref{lem:fs:cil:generating}, this is also:
\begin{equation}
\label{alphan}\begin{split}
\chi_{g,n}(\zeta_1,\ldots,\zeta_n) & = \bigg(X_{g,n}(w_1,\ldots,w_n) + \frac{\delta_{g,0}\delta_{n,2}}{\big(x(\zeta_1) - x(\zeta_2)\big)^2}\bigg)\dd x(\zeta_1)\cdots \dd x(\zeta_n) \\
&  = \bigg(\alpha^{-n}\,X_{g,n}(w_1,\ldots,w_n) + \frac{\delta_{g,0}\delta_{n,2}}{\big(w(\zeta_1) - w(\zeta_2)\big)^2} \bigg)\dd w_1 \cdots \dd w_n
\end{split}
\end{equation}
because we have $x(\zeta_i) = Q(\zeta_i) = V'(z_i) = \frac{w_i}{\alpha}$ as specified in the Lemma. The specialisation of Theorem~\ref{thm:tr:ciliated} now implies our main result, \textit{i.e.} that the fully simple generating series satisfies the topological recursion.

\begin{thm}
\label{mainprop} For (generic) values of $t_3,\ldots,t_{r + 1}$ such that the ramification points are simple, the $n$-differentials $\chi_{g,n}$ can be analytically continued to meromorphic $n$-forms on the spectral curve of Definition~\ref{def:spec:curve} specialised to  $\Lambda = \{0\}$. The analytic continuations, still denoted $\chi_{g,n}$, satisfy the topological recursion on this spectral curve (see \eqref{gammaTR}). The generating series $X_{g,n}$ are retrieved by expansion when $w_i \rightarrow 0$. \hfill $\star$
\end{thm}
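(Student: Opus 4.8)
The plan is to obtain Theorem~\ref{mainprop} as a direct specialisation of the topological recursion for ciliated maps (Theorem~\ref{thm:tr:ciliated}) at $\Lambda = \{0\}$, translated into the fully simple variables through the dictionary of Lemma~\ref{lem:fs:cil:generating} and Equation~\eqref{alphan}. The guiding observation is that $\chi_{g,n}$ is \emph{by definition} the restriction $\Gamma_{g,n}|_{\lambda_1 = 0}$, and that topological recursion is a statement attached to a single point of the parameter space $(\lambda_1,\ldots,\lambda_N,t_3,\ldots,t_r)$. Hence there is nothing to prove about ``commuting'' the specialisation with the recursion: one simply invokes Theorem~\ref{thm:tr:ciliated} at the particular point $N = 1$, $\lambda_1 = 0$. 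First I would check that the specialised spectral curve of Definition~\ref{def:spec:curve} is admissible. Under the standing hypothesis that $t_3,\ldots,t_{r+1}$ are chosen so that $Q'(\zeta)$ has simple roots at $\lambda_1 = 0$, the genericity requirement of Theorem~\ref{thm:tr:ciliated} is met; moreover, since $Q = V' + \mathcal{O}(\alpha^{-1})$ and $V''(0) = 1$, one has $Q'(a_1) = 1 + \mathcal{O}(\alpha^{-1}) \neq 0$, so that $a_1$ is a regular point of $x = Q$. Applying Theorem~\ref{thm:tr:ciliated} verbatim then yields that $\Gamma_{g,n}|_{\lambda_1 = 0} = \chi_{g,n}$ extends to a meromorphic $n$-form on the specialised curve and that, for $2g - 2 + n > 0$, these forms satisfy the recursion~\eqref{gammaTR1}, whose kernel and ramification data $b_1,\ldots,b_{r-1}$ are themselves the $\lambda_1 = 0$ specialisations of those of $\mathcal{S}$. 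The initial data $\chi_{0,1},\chi_{0,2}$ coincide with the spectral curve's $\Gamma_{0,1},\Gamma_{0,2}$ by the $\lambda_1 = 0$ specialisation of~\eqref{gammaTR}, so no separate verification of the base cases is needed.

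It then remains to read off $X_{g,n}$ from $\chi_{g,n}$. Here I would use the identity $w_i = \alpha V'(z_i) = \alpha Q(\zeta_i) = \alpha\, x(\zeta_i)$, which shows that the locus $w_i \to 0$ is exactly $x(\zeta_i) \to 0$, \textit{i.e.}~$\zeta_i \to a_1$, the unique preimage of $0$ under $x$ with $[a_1]_0 = 0$. Since $a_1$ is regular, $w_i$ is a local coordinate there, and the second line of Equation~\eqref{alphan} exhibits $\chi_{g,n}$ as $\big(\alpha^{-n}X_{g,n} + \delta_{g,0}\delta_{n,2}(w_1 - w_2)^{-2}\big)\,\dd w_1 \cdots \dd w_n$. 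Expanding the global form $\chi_{g,n}$ near $\zeta_i = a_1$ in the coordinate $w_i$ and subtracting the universal $(0,2)$ pole therefore recovers $X_{g,n}(w_1,\ldots,w_n)$ as a Laurent series in the $w_i$, as claimed.

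The step I expect to require the most care is precisely this final identification of the expansion point: the ciliated series $C_{g,n}$ is read off from $\chi_{g,n}$ near $\zeta_i \to \infty$, whereas $X_{g,n}$ must be extracted near $\zeta_i \to a_1$ (equivalently $w_i \to 0$), so that \emph{one and the same} global meromorphic form carries both generating series in its expansions at two different regular points. Concretely, one has to verify that the Taylor expansion in $V'(z_i)$ around $z_i = 0$ produced in the proof of Lemma~\ref{lem:fs:cil:generating} coincides, order by order in $\alpha^{-1}$, with the $w_i$-expansion of the analytically continued $\chi_{g,n}$ at $a_1$. Granting $Q'(a_1) \neq 0$, this is a routine compatibility of the change of variable $w_i = \alpha\, x(\zeta_i)$ with the formal $\alpha^{-1}$ structure; it is, however, the only place where the global analytic statement furnished by Theorem~\ref{thm:tr:ciliated} and the combinatorial generating series of fully simple maps must be matched by hand.
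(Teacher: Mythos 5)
Your proposal is correct and follows essentially the same route as the paper: there, Theorem~\ref{mainprop} is likewise obtained by specialising Theorem~\ref{thm:tr:ciliated} to $\Lambda=\{0\}$ and invoking Lemma~\ref{lem:fs:cil:generating} together with Equation~\eqref{alphan} to identify $\chi_{g,n}=\Gamma_{g,n}|_{\lambda_1=0}$ with the fully simple generating series. The points you spell out explicitly --- that $Q'(a_1)=1+\mathcal{O}(\alpha^{-1})\neq 0$ so $w_i=\alpha\, x(\zeta_i)$ is a local coordinate at $a_1$, and that $X_{g,n}$ is read off at $\zeta_i\to a_1$ while $C_{g,n}$ lives at $\zeta_i\to\infty$ --- are left implicit in the paper's one-line proof.
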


\subsection{Identification of the spectral curve}
\label{S43} 
The purpose of this section is to give a direct identification of the spectral curve based on the previous sections. 

\begin{thm} \label{maincor}The fully simple spectral curve is obtained from the ordinary spectral curve by exchanging the role of $\mathsf{x}$ and $\mathsf{y}$, and the conjecture of \cite{BG-F18} holds. \hfill $\star$
\end{thm}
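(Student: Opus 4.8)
The plan is to unpack what ``the fully simple spectral curve'' means and show it coincides with the $x\leftrightarrow y$ exchange of the ordinary one. By Theorem~\ref{mainprop}, the differentials $\chi_{g,n}$ satisfy topological recursion for the curve $\mathcal{S}=(\mathbb{P}^1,x,y,\Gamma_{0,2})$ of Definition~\ref{def:spec:curve} specialised to $\Lambda=\{0\}$, and the $X_{g,n}$ are recovered from them. So the ``fully simple spectral curve'' is governed by the pair $(x,y)$ in that definition; what must be shown is that, after accounting for the variable change $\alpha V'(z)=w$ and the reading conventions of Definition~\ref{def:gen:fun:ord:fs}, this pair is precisely the ordinary disc/cylinder data $(W_{0,1},W_{0,2})$ with the roles of the two coordinate functions swapped.

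First I would specialise the formulas of Definition~\ref{def:spec:curve} to $N=1$, $\lambda_1=0$, so that $Q$ solves $Q(\zeta)=V'\!\bigl(\zeta+\tfrac{1}{\alpha Q'(a)(\zeta-a)}\bigr)+\mathcal{O}(\zeta^{-1})$ with $Q(a)=V'(0)=0$, and write out
\[
x(\zeta)=Q(\zeta),\qquad y(\zeta)=\alpha\zeta+\frac{1}{Q'(a)(\zeta-a)}.
\]
Next I would identify these with the ordinary generating series. On the ordinary side the disc $W_{0,1}(x)$ and cylinder $W_{0,2}(x_1,x_2)$ determine the spectral curve $(\mathsf{x},\mathsf{y})$ for ordinary maps in the standard way, with $\mathsf{y}$ built from $W_{0,1}$. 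The key computation is to check that the function $y(\zeta)$ above equals the uniformisation of $W_{0,1}$ while $x(\zeta)=Q(\zeta)$ equals the uniformisation of the variable conjugate to it; concretely I would verify, using \eqref{01rel} and the relation $x(\zeta_i)=V'(z_i)=w_i/\alpha$ from \eqref{alphan}, that the roles of the spectral $x$ and $y$ are exchanged relative to the ordinary curve. The cylinder case is handled by comparing $\Gamma_{0,2}(\zeta_1,\zeta_2)=\dd\zeta_1\dd\zeta_2/(\zeta_1-\zeta_2)^2$ against \eqref{02rel}: the symmetric-in-$(x,y)$ form of \eqref{02rel} is exactly the statement that the standard bidifferential is unchanged under the exchange, which confirms $X_{0,2}$ matches $W_{0,2}$ under the swap.

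Concretely, the chain of identifications runs as follows. From \eqref{alphan} with $(g,n)=(0,1)$ the fully simple disc is read off $\chi_{0,1}=\Gamma_{0,1}|_{\lambda_1=0}=y\,\dd x$, and Lemma~\ref{lem:fs:cil:generating} gives $X_{0,1}(w)=C_{0,1}(z)|_{\lambda_1=0}+\alpha(w^{-1}+z)$ with $w=\alpha V'(z)$; substituting the explicit $C_{0,1}$ encoded by $y(\zeta)$ I would recover exactly the first relation in \eqref{01rel}, namely $W_{0,1}(\alpha^{-1}X_{0,1}(w))=\alpha w$. Since the ordinary curve has $\mathsf{x}=x$ and $\mathsf{y}$ determined by $W_{0,1}$ via $\mathsf{y}=\alpha^{-1}W_{0,1}(\mathsf{x})$, while here the fully simple data assign $x=Q(\zeta)=w/\alpha$ and $y$ built from $X_{0,1}$, the functional inverse in \eqref{01rel} is precisely the swap $(\mathsf{x},\mathsf{y})\mapsto(\mathsf{y},\mathsf{x})$. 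This is the heart of the identification; the rest is bookkeeping of the $\alpha$-powers and the $w_i^{k_i-1}$ versus $x_i^{-1-k_i}$ conventions.

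I expect the main obstacle to be precisely this bookkeeping of conventions and normalisations: matching the $\alpha$-prefactors in \eqref{alphan}, the shift-by-$\alpha$ disc terms, and the opposite expansion variables ($w_i\to 0$ here versus $x_i\to\infty$ for ordinary maps) so that the two $(0,1)$ and $(0,2)$ data genuinely agree under $x\leftrightarrow y$ rather than merely up to a transformation that could spoil the symplectic invariance. Once \eqref{01rel}--\eqref{02rel} are rederived in this uniformised form, the conclusion that the fully simple curve is the $x\leftrightarrow y$ exchange of the ordinary curve, and hence that the conjecture of \cite{BG-F18} holds, follows since topological recursion depends on the curve only through $(x,y,\Gamma_{0,2})$ and these now coincide with the exchanged ordinary data.
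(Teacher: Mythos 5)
Your route is genuinely different from the paper's. You propose to import the disc and cylinder relations \eqref{01rel}--\eqref{02rel}, which were established combinatorially in \cite{BG-F18}, and combine them with Theorem~\ref{mainprop} and Eynard's theorem for ordinary maps to identify the two curves. The paper does the opposite: it never uses \eqref{01rel}--\eqref{02rel} (indeed it presents Theorem~\ref{maincor} as giving an \emph{independent} proof of those relations), and instead works directly with the defining data of the fully simple curve. Concretely, after rescaling $\hat{y} = y/\alpha$ (equivalently $\hat{\chi}_{g,n} = \alpha^{2g-2+n}\chi_{g,n}$, which converts the weights to $\alpha^{-\#\mathcal{V}(M)}$, Eynard's convention), it introduces the explicit M\"obius change of uniformising coordinate $\zeta(\theta) = a + c\theta^{-1}$ with $c = (\alpha Q'(a))^{-1/2}$; in this coordinate $\hat{y}(\zeta(\theta)) = a + c(\theta + \theta^{-1})$, and the characterising equation \eqref{caracorum} for $Q$ becomes $\hat{x}(\zeta(\theta)) = \big[V'\big(a + c(\theta+\theta^{-1})\big)\big]_{\leq 0}$, which, together with the matching of the side conditions \eqref{cQ} and \eqref{calva} with \eqref{xycond}, exhibits $(\hat{x},\hat{y})$ as exactly $(\mathsf{y},\mathsf{x})$ of \eqref{spnormal}.

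The gap in your plan is precisely the step you dismiss as bookkeeping. A spectral curve is a \emph{parametrised} quadruple: the fully simple one lives on the $\zeta$-sphere, the ordinary one on the $\theta$-sphere, and what the theorem requires is an isomorphism $\phi$ of $\mathbb{P}^1$ with $\hat{x} = \mathsf{y}\circ\phi$ and $\hat{y} = \mathsf{x}\circ\phi$ (once $\phi$ is M\"obius, the standard bidifferentials correspond automatically, so \eqref{02rel} is not even needed). The relation \eqref{01rel} by itself does not produce $\phi$: it only says that the pairs $(\mathsf{x},\mathsf{y})$ and $(\hat{y},\hat{x})$ satisfy one and the same functional equation, each as a formal-series identity in its own regime ($\theta\to\infty$ versus $w\to 0$, i.e.~$\zeta\to a$). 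To extract $\phi$ you would need the further argument that both pairs are \emph{birational} parametrisations of the closure of the same graph $\{v = \alpha^{-2}W_{0,1}(u)\}$ and then invoke uniqueness of rational parametrisation up to M\"obius transformations --- including a check that $\mathsf{y}$ separates the two sheets of $\mathsf{x}$ and a justification of this reasoning over formal series in $\alpha^{-1}$. That argument, or the paper's explicit construction of $\phi$, is the actual mathematical content of the theorem and is absent from your write-up. Relatedly, your normalisations are off in a way that matters: with \eqref{xycond} and $W_{0,1}(x)\sim \alpha/x$, the ordinary curve satisfies $\mathsf{y} = \alpha^{-2}W_{0,1}(\mathsf{x})$, not $\mathsf{y}=\alpha^{-1}W_{0,1}(\mathsf{x})$, and it must be compared with the rescaled $\hat{y}=y/\alpha$, not with $y$ itself; getting these $\alpha$-powers wrong would identify the curves only up to a rescaling of one coordinate --- exactly the kind of discrepancy you rightly worry ``could spoil'' the statement --- rather than the exact exchange asserted by the theorem.
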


\begin{rem}
The topological recursion in Theorem~\ref{thm:tr:ciliated} and Theorem~\ref{mainprop} is stated for (generic) parameters, so that the corresponding spectral curve has only simple ramification points. This assumption can be waived using the Bouchard--Eynard topological recursion \cite{BHLMR14,BoEy13}, which allows spectral curves with ramification points of higher order, and exploiting its continuity properties with respect to parameters --- see \textit{e.g.} \cite[Section 2.2]{BDKLM20} for an example of such an argument. \hfill $\star$
\end{rem}

\begin{proof}[Proof of Theorem~\ref{maincor}]
We start by simplifying the description of the spectral curve for fully simple maps, which will eventually make the comparison to the spectral curve for ordinary maps clear. Following Theorem~\ref{mainprop} we  have to consider the specialisation of Definition~\ref{def:spec:curve} to $\Lambda = \{0\}$. This is the spectral curve in parametrised form:
\begin{equation*}
\begin{cases}
x(\zeta) = Q(\zeta),\vspace{0.1cm} \\
y(\zeta) = \alpha\, \zeta +\frac{1}{Q'(a)(\zeta-a)}, \vspace{0.1cm} \\
\chi_{0,1}(\zeta) = y(\zeta)\dd x(\zeta), \vspace{0.1cm} \\
\chi_{0,2}(\zeta_1,\zeta_2) = \frac{\dd \zeta_1 \dd \zeta_2}{(\zeta_1 - \zeta_2)^2},
\end{cases}
\end{equation*}
where $a \in \mathbb{C}[[\alpha^{-1}]]$ and the polynomial $Q$ with coefficients in $\mathbb{C}[[\alpha^{-1}]]$ are determined by:
\begin{equation}
\label{caracorum}\begin{array}{lll}
Q(\zeta) \mathop{=}\limits_{\zeta \rightarrow \infty} V'\left(\zeta+ \frac{1}{\alpha}\frac{1}{Q'(a)(\zeta-a)}\right) + \mathcal{O}(\zeta^{-1}), & \qquad & Q(\zeta) =V'(\zeta) + \mathcal{O}(\alpha^{-1}), \vspace{0.15cm} \\
Q(a) =V'(0)=0, & \qquad & a = \mathcal{O}(\alpha^{-1}).
\end{array}
\end{equation}
Besides, the variables $w \in \alpha\mathbb{C}(\zeta)[[\alpha^{-1}]]$ (resp.~$z \in \mathbb{C}(\zeta)[[\alpha^{-1}]]$) that should be used to extract the fully simple (resp.~ciliated) generating series are determined by:
\[ \frac{w}{\alpha} = V'(z) = Q(\zeta),\qquad \frac{w}{\alpha} = z + \mathcal{O}(\alpha^{-2}) = \zeta + \mathcal{O}(\alpha^{-2}).\]

Let now $\hat{\chi}_{g,n}$ be the multi-differentials computed by the topological recursion on the rescaled spectral curve:
\begin{equation*}
\begin{cases}
\hat{x}(\zeta)  = x(\zeta), \vspace{0.1cm} \\
\hat{y}(\zeta)  = \frac{y(\zeta)}{\alpha} = \zeta +\frac{1}{\alpha}\frac{1}{Q'(a)(\zeta-a)}, \vspace{0.1cm} \\
\hat{\chi}_{0,1}(\zeta)  = \hat{y}(\zeta)\dd\hat{x}(\zeta), \vspace{0.1cm} \\
\hat{\chi}_{0,2}(\zeta_1,\zeta_2) = \chi_{0,2}(\zeta_1,\zeta_2).
\end{cases}
\end{equation*}
As the only effect of this rescaling on the topological recursion is to multiply the recursion kernel by $\alpha$ (compare with \eqref{gammaTR1}), and the topology $(g,n)$  is reached after $2g - 2 +n$ steps of the recursion, we have:
\[\hat{\chi}_{g,n}(\zeta_1,\ldots,\zeta_n) = \alpha^{2g - 2 +n}\chi_{g,n}(\zeta_1,\ldots,\zeta_n)\,.\] 
Taking into account the $\alpha^{-n}$ present in \eqref{alphan} and coming back to Definition~\ref{def:gen:fun:ord:fs} for $X_{g,n}$, we see that $\hat{\chi}_{g,n}$ are generating series of fully simple maps with modified weights: 
\[
\hat{\mathscr{W}}(M)=\frac{\alpha^{-\#\mathcal{V}(M)}}{\# \Aut(M)}t_{3}^{f_3(M)}\cdots t_{r+1}^{f_{r+1}(M)},
\]
This choice for the weight is the one made for the enumeration of ordinary maps \textit{e.g.} in \cite[Chapter 3]{Eyn16}. More precisely, for any $g \geq 0$ and $n \geq 1$ we have:
\begin{equation}
\label{TRfull} \hat{\chi}_{g,n}(\zeta_1,\ldots,\zeta_n) = \sum_{\substack{M \in \mathbf{M}_{g,n} \\ {\rm fully}\,\,{\rm simple}}} \hat{\mathscr{W}}(M)\,w_1^{\deg(\partial_1M) - 1} \cdots w_n^{\deg(\partial_n M) - 1} \dd w_1 \cdots \dd w_n.
\end{equation}
where $w_i = \alpha Q(\zeta_i)$ as specified in Lemma~\ref{lem:fs:cil:generating}, and the equation should be understood as the equality of the all-order series expansion of the left-hand side when $w_i \rightarrow 0$ with the formal series on the right-hand side.

\medskip

Now, we introduce a different uniformising coordinate in $\mathbb{P}^1$, which we call $\theta$ and is related to $\zeta$ by:
\begin{equation}
\label{birat}
\zeta(\theta) = a + c\theta^{-1},\qquad c:= \big(\alpha\, Q'(a)\big)^{-\frac{1}{2}}.
\end{equation}
It can be easily checked that $c \in \mathbb{C}[[\alpha^{-\frac{1}{2}}]]$, in particular
\begin{equation}
\label{calva}c = \mathcal{O}(\alpha^{-\frac{1}{2}}).
\end{equation}
We then find
\begin{equation}
\label{fullysimplesp} \check{\mathcal{S}}\,:\,\qquad \begin{cases}
\hat{x}(\zeta(\theta)) = Q\big(a + c\theta^{-1}\big)\,, \vspace{0.1cm} \\
 \hat{y}(\zeta(\theta)) = a + c(\theta+ \theta^{-1})\,, \vspace{0.1cm} \\
\hat{\chi}_{0,2}(\zeta(\theta_1),\zeta(\theta_2)) = \frac{\dd \theta_1 \dd \theta_2}{(\theta_1 - \theta_2)^2}\,,
\end{cases}
\end{equation}
and the polynomial $Q$ is characterised by
$$
Q\big(a + c\theta^{-1}\big) \mathop{=}\limits_{\theta \rightarrow 0} V'\big(a + c(\theta + \theta^{-1})\big) + \mathcal{O}(\theta).
$$
In other words:
$$
Q\big(a +c\theta^{-1}\big) = \Big[V'\big(a + c(\theta + \theta^{-1})\big)\Big]_{\leq 0}\;,
$$
where $[\cdots]_{\leq 0}$ is the polynomial part in the variable $\theta^{-1}$. Besides, we have the constraints
\begin{equation}
\label{cQ}Q(a) = 0\qquad {\rm and}\qquad  \lim_{\theta \rightarrow \infty} \theta\,Q(a + c\theta^{-1}) = cQ'(a) = (\alpha c)^{-1}.
\end{equation}
The first one is a reminder from \eqref{caracorum} and the second one follows from the definition of $c$ in \eqref{birat}. For comparison, the spectral curve for ordinary maps is --- see \textit{e.g.}\footnote{In \cite{Eyn16}, the triple $(\alpha^{-1},a,c)$ was denoted $(t,\alpha,\gamma)$.} \cite[Section 3.1.3]{Eyn16}:
\begin{equation}
\label{spnormal}
\mathcal{S}\,:\,\qquad \begin{cases}
\mathsf{x}(\theta)  = a + c(\theta + \theta^{-1})\,, \vspace{0.1cm} \\
\mathsf{y}(\theta)  = \big[V'(a + c(\theta + \theta^{-1}))\big]_{\leq 0}\,, \vspace{0.1cm} \\
\omega_{0,1}(\theta)  = \mathsf{y}(\theta)\dd \mathsf{x}(\theta)\,, \vspace{0.1cm} \\
\omega_{0,2}(\theta_1,\theta_2)  = \frac{\dd \theta_1 \dd \theta_2}{(\theta_1 - \theta_2)^2}\,,
\end{cases}
\end{equation}
where $c$ (up to a sign) and $a$ are uniquely determined by the conditions
\begin{equation}
\label{xycond}\begin{split}
& \mathsf{y}(\theta) \mathop{\sim}_{\theta \rightarrow \infty} \frac{\alpha^{-1}}{x(\theta)} \sim \frac{1}{\alpha c \theta}\,, \\
& c  = \mathcal{O}(\alpha^{-\frac{1}{2}})\,,  \\
& a = \mathcal{O}(\alpha^{-1}) \,,
\end{split}
\end{equation}
and $\alpha^{-1}$ is the weight per vertex. We recognise
$$
\mathsf{y}(\theta) = \hat{x}(\theta),\qquad \mathsf{x}(\theta) = \hat{y}(\theta),\qquad \omega_{0,2}(\theta_1,\theta_2) = \hat{\chi}_{0,2}(\theta_1,\theta_2),
$$
with parameters $(a,c)$ determined in an identical way: the first condition of \eqref{xycond} is equivalent to \eqref{cQ}, the second condition is equivalent to \eqref{calva} (the sign ambiguity amounts to the choice of squareroot), and the third condition matches the last condition in \eqref{caracorum}.

\medskip

It is well-known that topological recursion on the spectral curve \eqref{spnormal} computes the generating series of ordinary maps. More precisely, for $g \geq 0$ and $n \geq 1$, let us define:
\begin{equation}
\label{TRord}\omega_{g,n}(\theta_1,\ldots,\theta_n) \coloneqq \bigg(W_{g,n}(\mathsf{x}(\theta_1),\ldots,\mathsf{x}(\theta_n)) + \frac{\delta_{g,0}\delta_{n,2}}{\big(\mathsf{x}(\theta_1) - \mathsf{x}(\theta_2)\big)^2}\bigg)\dd \mathsf{x}(\theta_1) \cdots \dd \mathsf{x}(\theta_n).
\end{equation}
It is established in \cite{Eyn04,Eyn16}  that for any $t_3,\ldots,t_{r + 1} \in \mathbb{C}$, the multi-differentials $\omega_{g,n}$ can be ana\-ly\-ti\-cal\-ly continued to meromorphic $n$-forms on the spectral curve \eqref{spnormal}. If we still denote $\omega_{g,n}$ the analytic continuations, $\omega_{0,2}$ is the standard bidifferential and $\omega_{g,n}$ for $2g - 2 + n > 0$ is computed by the topological recursion on this spectral curve. This explains the formulation of the claim, and concludes the proof, given Theorem~\ref{mainprop}. 
\end{proof}

\section{Applications}

\label{S5}

\label{S5}

In this section, we briefly explain motivation for this work, and some consequences of Theorems~\ref{mainprop}-\ref{maincor} which pave the way for future investigations.

\medskip

\subsection{Symplectic invariance}

\subsubsection{Context}

Let $\mathcal{S} = (\mathcal{C},x,y,\omega_{0,2})$ be a spectral curve and $P$ the set of zeroes of $\dd x$, the topological recursion constructs\footnote{There are assumptions on the spectral curve for this construction to be well-defined, we refer to \cite{BKS} for a discussion and the weakest currently known set of assumptions.} multi-differentials $\omega_{g,n}$ indexed by $g \geq 0$ and $n \geq 1$, but also the following numbers --- called \emph{free energies} --- indexed by $g \geq 2$:
\begin{equation}
\label{defFg} \mathfrak{F}_g[\mathcal{S}] = \frac{1}{2 - 2g} \sum_{\rho \in P} \Res_{z = \rho} \bigg(\int_{o_{\rho}}^z y\dd x\bigg)\omega_{g,1}(z).
\end{equation}
Here, $o_{\rho} \in \mathcal{C}$ is an arbitrary point in a small contractible neighborhood of $\rho$ and we integrate from $o_{\rho}$ to $z$ in such a neighborhood.
\begin{rem}
If $y\dd x$ is meromorphic on a connected curve $\mathcal{C}$, we can also choose $o_{\rho}$ independent of $\rho$, and \eqref{defFg} does not depend on the path of integration from $o$ to $z$ since $\Res_{z = \rho} \omega_{g,1} = 0$ for any $\rho \in P$, see \textit{e.g.} \cite{EO07inv}. \hfill $\star$
\end{rem}
 Let $\check{\mathcal{S}} = (\mathcal{C},y,x,\omega_{0,2})$ be the spectral curve where the role of $x$ and $y$ are exchanged, and $\check{P}$ be the set of zeroes of $\dd y$. It is expected that for reasonable spectral curves, we have the equality
\begin{equation}
\label{Fginv}
\mathfrak{F}_{g}[\mathcal{S}] = \mathfrak{F}_{g}[\check{\mathcal{S}}].
\end{equation}
although the multi-differentials constructed by the topological recursion for $\mathcal{S}$ and $\check{\mathcal{S}}$ are different. This property is called symplectic invariance. It is deep and still mysterious. In applications of topological recursion in Gromov--Witten theory of toric Calabi--Yau threefolds \cite{BKMP09,EO15}, it corresponds for instance to the framing invariance of the closed sector. The precise meaning of ''reasonable'', \textit{i.e.} the minimal assumptions on the spectral curve under which \eqref{Fginv} is expected to hold (perhaps after  adding certain explicit terms on the right-hand side) are not known. Eynard and Orantin have proposed in \cite{EO2MM,EOxy} a rather involved derivation via the two-matrix model; yet, their result does not seem to always apply in cases of interest. Understanding better the origin of symplectic invariance, formulating it precisely and obtaining its proof under the weakest possible assumptions remains admittedly a fundamental and open problem in the theory of topological recursion.

\medskip

Here, we are in position to give an interpretation of some pieces of the puzzle when $\mathcal{S} = (\mathbb{P}^1,\mathsf{x},\mathsf{y},\omega_{0,2})$ is the spectral curve \eqref{spnormal} governing ordinary maps. In that case $\check{\mathcal{S}} = (\mathbb{P}^1,\mathsf{y},\mathsf{x},\hat{\omega}_{0,2})$ is the spectral curve \eqref{fullysimplesp} which we proved to govern fully simple maps, and numerically, \eqref{Fginv} does not seem to hold as such. 

\subsubsection{Free energy computations}

\medskip

We shall compute both sides of \eqref{Fginv} for our two spectral curves, in terms of ordinary and fully simple generating series in topology $(g,1)$. We use the letters $\omega$ (resp.~$\check{\omega}$) for the multi-differentials for the spectral curve $\mathcal{S}$ (resp.~$\check{\mathcal{S}}$), so in fact $\check{\omega}$ coincide with $\hat{\chi}$ of Section~\ref{S43}. We denote $P = \{-1,1\}$ the set of zeroes of $\mathsf{x}'(\theta) = c(1 - \theta^{-2})$ and $\check{P}$ the zeroes of $\mathsf{y}'(\theta)$. By a continuity argument, it is sufficient to prove the result for $t_3,\ldots,t_{r + 1}$ such that the zeroes of $\mathsf{y}'$ are simple, \textit{i.e.} $\# \check{P} = r$.  Notice that
\begin{equation}
\label{Yves}\mathsf{y}(\theta) = \big[V'(\mathsf{x}(\theta))\big]_{\leq 0}  = V'(\mathsf{x}(\theta)) + \mathcal{O}(\theta),
\end{equation}
where the $\mathcal{O}(\theta)$ is in fact a polynomial in $\theta$.

\medskip

Let us fix $g \geq 2$. According to the basic properties of the topological recursion \cite{EO07inv}, $\omega_{g,1}$ (resp. $\check{\omega}_{g,1}$) is a meromorphic $1$-form with poles at $P$ (resp.~$\check{P}$) and zero residues. Then, we can introduce the rational functions
$$
\Phi_{g,1}(\theta) = \int_{\infty}^{\theta} \omega_{g,1},\qquad \check{\Phi}_{g,1}(\theta) = \int_{\infty}^{\theta} \check{\omega}_{g,1}.
$$
Another basic property is the linear loop equation (see \textit{e.g.} \cite{BSblob}), which states that
\begin{equation}
\label{linin}
\sum_{\tilde{\theta} \in \mathsf{x}^{-1}(\mathsf{x}(\theta))} \omega_{g,1}(\tilde{\theta})
\end{equation}
is holomorphic near $P$. But here $\mathsf{x}^{-1}(\mathsf{x}(\theta)) = \{\theta,\theta^{-1}\}$; in particular, the involution $\theta \mapsto \theta^{-1}$ giving the second point in this fiber is globally defined on $\mathbb{P}^1$. Therefore, the left-hand side of \eqref{linin} is a holomorphic $1$-form on $\mathbb{P}^1$, hence
\begin{equation}
\label{omeg1line}\omega_{g,1}(\theta) + \omega_{g,1}(\theta^{-1}) = 0.
\end{equation}
See also \cite{Eyn16}. Note that the linear loop equation for $\check{\omega}_{g,1}$ states that $\sum_{\tilde{\theta} \in \mathsf{y}^{-1}(\mathsf{y}(\theta))} \check{\omega}_{g,1}(\tilde{\theta})$ is holomorphic near $\check{P}$, which does not lead to any formula for $\check{\omega}_{g,1}(\theta^{-1})$. We also mention the property obtained in \cite{EO07inv}:
\begin{equation}
\label{xypr}\sum_{\rho \in \check{P}} \Res_{\theta = \rho} \mathsf{x}(\theta)\mathsf{y}(\theta) \check{\omega}_{g,1}(\theta) = 0.
\end{equation}
An analogous one is also true for $\omega_{g,1}$ but we will not need it.

\medskip

We recall that topological recursion for ordinary maps (see \eqref{TRord}) and for fully simple maps (see~\eqref{TRfull}) yields the expansions:
\begin{equation}
\label{expnunu}\begin{split}
\omega_{g,1}(\theta) & \mathop{=}_{\theta \rightarrow \infty}  \,\,\sum_{k = 1}^{r + 1} {\rm Map}_{g;(k)}\,\frac{\dd \mathsf{x}(\theta)}{x(\theta)^{k + 1}} + \mathcal{O}\bigg(\frac{\dd \mathsf{x}(\theta)}{\mathsf{x}(\theta)^{r + 3}}\bigg), \\
\check{\omega}_{g,1}(\theta) & \mathop{=}_{\theta \rightarrow \infty} \,\, \sum_{k = 1}^{r + 1} {\rm FSMap}_{g;(k)}\,\mathsf{y}(\theta)^{k - 1}\dd \mathsf{y}(\theta) + \mathcal{O}\big(\mathsf{y}(\theta)^{r + 1}\dd \mathsf{y}(\theta)\big) \\
& \mathop{=}_{\theta \rightarrow \infty} \,\,- \sum_{k = 1}^{r + 1} \sum_{\substack{\ell_1,\ldots,\ell_{k} \geq 0 \\ \ell_1 + \cdots + \ell_k + k \leq r + 1}} \frac{(\ell_1 + \cdots + \ell_k + k)\,{\rm FSMap}_{g;(k)}}{k\,\mathsf{x}(\theta)^{\ell_1 + \cdots + \ell_k + k + 1}}\bigg[\prod_{i = 1}^{k} {\rm Map}_{0,(\ell_i)}\bigg]\dd \mathsf{x}(\theta) \\
&\quad\quad + \mathcal{O}\bigg(\frac{\dd\mathsf{x}(\theta)}{\mathsf{x}(\theta)^{r + 3}}\bigg).
\end{split}  
\end{equation} 
We could truncate the sum in the last line using $\mathsf{y}(\theta) = \mathcal{O}\big(\mathsf{x}(\theta)^{-1}\big)$ when $\theta \rightarrow \infty$. We will see that the expansion of $\check{\omega}_{g,1}(\theta)$ near $\theta = 0$ plays a role for the computation of $\mathfrak{F}_g[\check{\mathcal{S}}]$. We therefore introduce a name for its coefficients:
$$
\check{\omega}_{g,1}(\theta) \mathop{=}_{\theta \rightarrow 0}\,\, \sum_{k = 1}^{r + 1} {\rm Rest}_{g,(k)}\,\frac{\dd \mathsf{x}(\theta)}{\mathsf{x}(\theta)^{k + 1}} + \mathcal{O}\bigg(\frac{\dd\mathsf{x}(\theta)}{\mathsf{x}(\theta)^{r + 3}}\bigg).
$$

\medskip

We are ready to compute $\mathfrak{F}_g[\mathcal{S}]$, starting from \eqref{defFg}.

\begin{lem}
\label{lemFgs} For $g \geq 2$, the generating series of closed maps of genus $g$ satisfy:
$$
(2 - 2g)\mathfrak{F}_g[\mathcal{S}] = \alpha \partial_{\alpha} {\rm Map}_{g,\emptyset} =  - \frac{{\rm Map}_{g;(2)}}{2} + \sum_{k = 3}^{r + 1} t_k \frac{{\rm Map}_{g;(k)}}{k}.
$$
\hfill $\star$
\end{lem}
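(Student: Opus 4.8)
The plan is to establish the two equalities separately: the right-hand identity by a combinatorial manipulation of the weights \eqref{def:Wscr}, and the left-hand identity by a residue computation on the spectral curve $\mathcal{S}$.

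For the right-hand equality I would start from ${\rm Map}_{g,\emptyset}=\sum_{M}\mathscr{W}(M)$, the sum over closed maps of genus $g$, whose faces are all internal of degree in $\{3,\dots,r+1\}$. Applying $\alpha\partial_\alpha$ brings down the factor $2-2g-\#\mathcal{V}(M)$, and Euler's relation for a closed surface gives $2-2g-\#\mathcal{V}(M)=\#\mathcal{F}(M)-\#\mathcal{E}(M)$, so that $\alpha\partial_\alpha{\rm Map}_{g,\emptyset}=\sum_M(\#\mathcal{F}(M)-\#\mathcal{E}(M))\,\mathscr{W}(M)$. I would then interpret the two sums by marking/rooting bijections. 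Marking one internal face of degree $k$ together with one of its $k$ incident oriented edges turns a closed map into a map of topology $(g,1)$ with a boundary of degree $k$; since rooting kills automorphisms and the surgery removes exactly the factor $t_k$, this yields $\sum_M \#\mathcal{F}(M)\,\mathscr{W}(M)=\sum_{k=3}^{r+1}t_k\partial_{t_k}{\rm Map}_{g,\emptyset}=\sum_{k=3}^{r+1}\frac{t_k}{k}\,{\rm Map}_{g;(k)}$. Marking instead one of the $2\#\mathcal{E}(M)$ oriented edges and blowing it up into a bigon boundary face (adding one edge and one degree-$2$ boundary, while leaving $\#\mathcal{V}$, the genus and all internal-face degrees unchanged) gives a weight-preserving bijection with degree-$2$ boundary maps, whence $\sum_M \#\mathcal{E}(M)\,\mathscr{W}(M)=\tfrac12{\rm Map}_{g;(2)}$. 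Subtracting produces exactly $-\tfrac{{\rm Map}_{g;(2)}}{2}+\sum_{k=3}^{r+1}t_k\frac{{\rm Map}_{g;(k)}}{k}$.

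For the left-hand equality I would compute the residue sum \eqref{defFg} directly. Since $\omega_{g,1}$ is meromorphic on $\mathbb{P}^1$ with poles only at the ramification points $P=\{-1,1\}$ and vanishing residues there, while $\Phi=\int\mathsf{y}\,d\mathsf{x}$ is regular away from $\theta\in\{0,\infty\}$ (where $\mathsf{y}\,d\mathsf{x}\sim\alpha^{-1}\,d\theta/\theta$ produces a logarithm), the total residue of $\Phi\,\omega_{g,1}$ vanishes and I can trade the residues at $\pm1$ for minus the residues at $0$ and $\infty$:
\[
(2-2g)\,\mathfrak{F}_g[\mathcal{S}]=-\Res_{\theta=0}\Phi\,\omega_{g,1}-\Res_{\theta=\infty}\Phi\,\omega_{g,1}.
\]
I would fold the residue at $0$ onto the one at $\infty$ using the deck involution $\theta\mapsto\theta^{-1}$ of $\mathsf{x}$, which fixes $d\mathsf{x}$, reverses $\omega_{g,1}$ by the linear loop equation \eqref{omeg1line}, and sends $\mathsf{y}(\theta)\mapsto V'(\mathsf{x}(\theta))-\mathsf{y}(\theta)$ (because $\mathsf{y}$ is the polynomial part of $V'(\mathsf{x})$ in $\theta^{-1}$ and, by \eqref{xycond}, has no constant term). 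The two contributions at $\infty$ then recombine so that only the part of $\mathsf{y}$ equal to $V'(\mathsf{x})$ survives, and inserting the expansion \eqref{expnunu} of $\omega_{g,1}$ at $\infty$ — whose coefficients are the ${\rm Map}_{g;(k)}$ — together with the coefficients $1,-t_3,\dots,-t_{r+1}$ of $V'$ reproduces the explicit right-hand side. Chaining the two identities then completes the proof.

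The main obstacle I anticipate is the bookkeeping in this last residue computation: the integrand $\Phi\,\omega_{g,1}$ is not single-valued because of the logarithm in $\Phi$, so I must verify (as in the Remark following \eqref{defFg}, using $\Res\omega_{g,1}=0$) that the residues are nonetheless well defined and that the logarithmic term contributes nothing, and I must carry the involution-pullback of the basepoint-dependent integration constant without letting it spoil the matching of coefficients. The combinatorial half is routine once the bigon bijection is set up; the only delicate point there is the automorphism counting, which is handled by the standard fact that an automorphism of a map fixing a single oriented edge is the identity.
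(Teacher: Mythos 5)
Your combinatorial half (marking an internal face of degree $k$, respectively blowing a marked oriented edge up into a bigon boundary, with the weight bookkeeping through Euler's relation) is exactly the argument in the paper, and it is correct. Your analytic half also heads for the same endpoint as the paper, namely $(2-2g)\mathfrak{F}_g[\mathcal{S}]=\Res_{\theta=\infty}V(\mathsf{x}(\theta))\,\omega_{g,1}(\theta)$, and your folding step is sound: $\mathsf{y}(\theta^{-1})=V'(\mathsf{x}(\theta))-\mathsf{y}(\theta)$ does follow from \eqref{Yves} and the absence of a constant term guaranteed by \eqref{xycond}.

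The genuine gap is in your first analytic step, the claim that the total residue of $\Phi\,\omega_{g,1}$ vanishes, where $\Phi=\int\mathsf{y}\,\dd\mathsf{x}$. The residue theorem applies to meromorphic forms, and $\Phi\,\omega_{g,1}$ is not one: since $\mathsf{y}\,\dd\mathsf{x}$ has residues $\pm\alpha^{-1}$ at $\theta=0,\infty$, the primitive $\Phi$ has a logarithmic branch cut joining $0$ to $\infty$. Deforming contours from $P=\{\pm1\}$ to $\{0,\infty\}$ therefore produces, besides the two residues you keep, a branch-cut contribution equal (up to sign) to
\begin{equation*}
\frac{1}{\alpha}\int_{0}^{\infty}\omega_{g,1},
\end{equation*}
the integral being taken along the cut (it is path-independent because $\omega_{g,1}$ has vanishing residues). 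Your proof needs this period to vanish, and that does \emph{not} follow from the local properties you invoke (zero residues at $P$, the linear loop equation \eqref{omeg1line}, decay at $\infty$): for $g=1$, where all of those properties hold equally, the same quantity is nonzero --- for the Gaussian specialisation $t_3=\cdots=t_{r+1}=0$ an explicit computation of $\omega_{1,1}$ gives $\int_{\infty}^{0}\omega_{1,1}=-\tfrac{1}{12c^{2}}$, the familiar genus-one ``$-\tfrac{1}{12}$'' anomaly, and correspondingly $\sum_{\rho\in P}\Res_{\rho}\Phi\,\omega_{1,1}=-\tfrac{1}{12}\neq 0$ even though $\mathrm{Map}_{1,\emptyset}=0$ there. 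So ``the logarithmic term contributes nothing'' is not a deferrable bookkeeping point; it is a global statement about $\omega_{g,1}$, exactly as strong as the identity being proven, and it requires its own argument valid for $g\geq 2$.

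For comparison: the paper avoids the multivalued integrand by first integrating by parts, so that the residue theorem is applied to the rational form $\Phi_{g,1}\,\mathsf{y}\,\dd\mathsf{x}$ with $\Phi_{g,1}=\int_{\infty}^{\cdot}\omega_{g,1}$; but the same delicate point then reappears there in two guises: the replacement of $\mathsf{y}\,\dd\mathsf{x}$ by $\dd V(\mathsf{x})$ inside $\Res_{\theta=0}$, although $(\mathsf{y}-V'(\mathsf{x}))\,\dd\mathsf{x}$ has residue $1/\alpha$ at $\theta=0$, and the assertion that $\Phi_{g,1}$ is anti-invariant, whereas integrating \eqref{omeg1line} only yields $\Phi_{g,1}(\theta)+\Phi_{g,1}(\theta^{-1})=\int_{\infty}^{0}\omega_{g,1}$. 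Both are again the statement $\int_{\infty}^{0}\omega_{g,1}=0$. Your proposal is thus at the same level of rigour as the paper on this point, but as a standalone proof it is incomplete: you must either prove that this period vanishes for $g\geq 2$, or reorganise the computation so that it never enters.
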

\begin{proof} Integration by parts in \eqref{defFg} yields:
$$
(2 - 2g)\mathfrak{F}_g[\mathcal{S}] = \sum_{\rho \in P} \Res_{\theta = \rho} \bigg(\int_{\infty}^{\theta} \mathsf{y}\dd \mathsf{x}\bigg) \omega_{g,1}(\theta) = - \sum_{\rho \in P} \Res_{\theta = \rho}  \Phi_{g,1}(\theta)\,\mathsf{y}(\theta) \dd \mathsf{x}(\theta).
$$
The $1$-form $\omega_{0,1}(\theta) = \mathsf{y}(\theta)\dd\mathsf{x}(\theta)$ has a simple pole at $\theta = \infty$ and a pole of order $r + 2$ at $\theta = 0$. Besides, the function $\Phi_{g,1}(\theta)$ has a simple zero at $\theta = \infty$. Moving contours we deduce that
$$
(2 - 2g)\mathfrak{F}_g[\mathcal{S}] = \Res_{\theta = 0} \Phi_{g,1}(\theta) \,\mathsf{y}(\theta)\dd\mathsf{x}(\theta)  = \Res_{\theta = 0}\Phi_{g,1}(\theta)\, \dd V(\mathsf{x}(\theta)) ,
$$
where we have used \eqref{Yves}. We then perform the change of variable $\theta \mapsto \theta^{-1}$ and use that $\mathsf{x}$ is invariant while $\Phi_{g,1}$ is antiinvariant (by integration of \eqref{omeg1line}) to find
$$
(2 - 2g)\mathfrak{F}_g[\mathcal{S}] = -\Res_{\theta = \infty} \Phi_{g,1}(\theta)\,\dd V(\mathsf{x}(\theta)) = \Res_{\theta = \infty} V(\mathsf{x}(\theta)) \omega_{g,1}(\theta).
$$ 
We rather use the local coordinate $\mathsf{x}$ near $\theta = \infty$ and insert the expansion \eqref{expnunu} and Definition~\ref{def:potential} of the potential, which results in:
\begin{equation}
\label{Fgun}\begin{split}
(2 - 2g)\mathfrak{F}_g[\mathcal{S}] & = \Res_{x = \infty} \dd x\bigg(\frac{x^2}{2} - \sum_{m = 3}^{r + 1} \frac{t_m}{m}\,x^m\bigg)\,\bigg(\sum_{k = 1}^{r + 1} {\rm Map}_{g;(k)}\,\frac{\dd x}{x^{k + 1}}\bigg) \\
& = - \frac{{\rm Map}_{g;(2)}}{2} + \sum_{k = 3}^{r + 1} t_k \frac{{\rm Map}_{g;(k)}}{k}.
\end{split}
\end{equation}
The weight of an ordinary map $M$ includes a factor $\alpha^{\deg M} = \alpha^{2 - 2g(M) - \#\mathcal{V}(M)} = \alpha^{-\#\mathcal{E}(M) + \#\mathcal{F}(M)}$. Closed ordinary maps of genus $g$ with a marked (non-oriented) edge are in bijection with ordinary maps of genus $g$ with an (unrooted) boundary face of degree $2$: just glue the two edges of the boundary face. Closed ordinary maps of genus $g$ with a marked (unrooted) face are in bijection with ordinary maps of genus $g$ with an (unrooted) boundary face of degree $k \in \{3,\ldots,r + 1\}$. All together, these observations imply that
$$
\alpha \partial_{\alpha} {\rm Map}_{g,\emptyset} = - \frac{{\rm Map}_{g;(2)}}{2} + \sum_{k = 3}^{r + 1} t_k \frac{{\rm Map}_{g;(k)}}{k} = (2 - 2g)\mathfrak{F}_{g}[\mathcal{S}].
$$
\end{proof}

\medskip

We now turn to the free energy for $\check{\mathcal{S}}$. It is not directly expressed in terms of ${\rm FSMap}$.
\begin{lem}
For $g \geq 2$, we have
\begin{equation}
\label{idngun}(2-2g)\mathfrak{F}_g[\check{\mathcal{S}}] =  - \frac{{\rm Rest}_{g,(2)}}{2} + \sum_{k = 3}^{r + 1} \frac{{\rm Rest}_{g,(k)}}{k}.
\end{equation}
\hfill $\star$
\end{lem}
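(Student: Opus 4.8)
The plan is to follow verbatim the strategy of Lemma~\ref{lemFgs}, now applied to the exchanged curve $\check{\mathcal{S}}$, on which the roles of $x$ and $y$ in \eqref{defFg} are played by $\mathsf{y}$ and $\mathsf{x}$: thus $\check{P}$ is the set of zeroes of $\dd\mathsf{y}$ and the relevant Liouville form is $\mathsf{x}\,\dd\mathsf{y}$. I would start from $(2-2g)\mathfrak{F}_g[\check{\mathcal{S}}]=\sum_{\rho\in\check{P}}\Res_{\theta=\rho}\big(\int^{\theta}\mathsf{x}\,\dd\mathsf{y}\big)\,\check{\omega}_{g,1}(\theta)$ and integrate by parts to rewrite it as $-\sum_{\rho\in\check{P}}\Res_{\theta=\rho}\check\Phi_{g,1}(\theta)\,\mathsf{x}(\theta)\,\dd\mathsf{y}(\theta)$, where $\check\Phi_{g,1}(\theta)=\int_{\infty}^{\theta}\check{\omega}_{g,1}$ is single-valued (the residues of $\check\omega_{g,1}$ vanish), meromorphic with poles only at $\check P$, and has a simple zero at $\theta=\infty$. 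I would then move contours by the residue theorem on $\mathbb{P}^1$: the form $\check\Phi_{g,1}\,\mathsf{x}\,\dd\mathsf{y}$ has poles only at $\check P$ and at $\theta=0$ (where $\mathsf{x}\,\dd\mathsf{y}$ has a pole of order $r+2$), while at $\theta=\infty$ the simple zero of $\check\Phi_{g,1}$ cancels the simple pole of $\mathsf{x}\,\dd\mathsf{y}$, so that $(2-2g)\mathfrak{F}_g[\check{\mathcal{S}}]=\Res_{\theta=0}\check\Phi_{g,1}(\theta)\,\mathsf{x}(\theta)\,\dd\mathsf{y}(\theta)$.

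To evaluate this residue I would split $\int^{\theta}\mathsf{x}\,\dd\mathsf{y}=\mathsf{x}\mathsf{y}-\int^{\theta}\mathsf{y}\,\dd\mathsf{x}$ and invoke \eqref{xypr}: together with the regularity of $\mathsf{x}\mathsf{y}$ at $\theta=\infty$ it forces $\Res_{\theta=0}\mathsf{x}\mathsf{y}\,\check\omega_{g,1}=0$, so that only the $\int^{\theta}\mathsf{y}\,\dd\mathsf{x}$ part survives. Near $\theta=0$ one has $x:=\mathsf{x}(\theta)\to\infty$ and, by \eqref{Yves}, $\mathsf{y}\,\dd\mathsf{x}=\dd V(\mathsf{x})+\big(\mathsf{y}-V'(\mathsf{x})\big)\dd\mathsf{x}$ with $\mathsf{y}-V'(\mathsf{x})=\mathcal{O}(\theta)$; expanding $\check\omega_{g,1}$ in the local coordinate $x$ through the definition of ${\rm Rest}_{g,(k)}$, the residue becomes a finite linear combination of the ${\rm Rest}_{g,(k)}$ that I would identify with $-\tfrac{1}{2}{\rm Rest}_{g,(2)}+\sum_{k=3}^{r+1}\tfrac{1}{k}{\rm Rest}_{g,(k)}$.

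The hard part, and the genuine difference with Lemma~\ref{lemFgs}, is this last step. In the ordinary case the involution $\theta\mapsto\theta^{-1}$ fixes $\mathsf{x}$ and makes $\Phi_{g,1}$ antiinvariant, which both annihilates the $\mathcal{O}(\theta)$ correction to $\mathsf{y}\,\dd\mathsf{x}$ (because $\Phi_{g,1}(0)=0$) and transports the residue from $0$ to $\infty$; here that symmetry is unavailable, since the linear loop equation for $\check\omega_{g,1}$ gives no relation for $\check\omega_{g,1}(\theta^{-1})$. Consequently two pieces that were harmless before now survive a priori: the non-meromorphic (logarithmic) part of the primitive $\int^{\theta}\mathsf{x}\,\dd\mathsf{y}$ coming from the nonzero residue $-1/\alpha$ of $\mathsf{x}\,\dd\mathsf{y}$ at $\theta=0$, and the boundary value $\check\Phi_{g,1}(0)=\int_{\infty}^{0}\check\omega_{g,1}$, which no symmetry forces to vanish. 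The crux is to show that, once \eqref{xypr} has removed the $\mathsf{x}\mathsf{y}$ term, these contributions reorganise the coefficients so that all dependence on $t_3,\dots,t_{r+1}$ cancels, leaving the stated $t$-independent answer; I expect this to reduce to an identity among the ${\rm Rest}_{g,(k)}$ (equivalently a value for $\check\Phi_{g,1}(0)$) encoding the global rational structure of $\check\omega_{g,1}$, and establishing it cleanly is where I anticipate the real work to lie.
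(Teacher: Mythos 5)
Up to the identity $(2-2g)\mathfrak{F}_g[\check{\mathcal{S}}] = -\Res_{\theta=0}\check{\Phi}_{g,1}(\theta)\,\mathsf{y}(\theta)\dd\mathsf{x}(\theta)$, your argument is correct and is essentially the paper's own: the paper also writes $\mathsf{x}\dd\mathsf{y} = -\mathsf{y}\dd\mathsf{x}+\dd(\mathsf{x}\mathsf{y})$, invokes \eqref{xypr}, integrates by parts against $\check{\Phi}_{g,1}$, and moves contours after checking that $\theta=\infty$ does not contribute (whether \eqref{xypr} is applied at the points of $\check{P}$, as in the paper, or at $\theta=0$ after the contour move, as you do, is immaterial). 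The genuine gap is that you stop there. The paper finishes in three short steps: use \eqref{Yves} to replace $\mathsf{y}\dd\mathsf{x}$ by $\dd V(\mathsf{x})$ inside the residue at $\theta=0$, integrate by parts once more to obtain $\Res_{\theta=0}V(\mathsf{x}(\theta))\,\check{\omega}_{g,1}(\theta)$, and evaluate this residue using the expansion defining ${\rm Rest}_{g,(k)}$ (only finitely many terms contribute, since $\deg V=r+1$). By explicitly deferring this evaluation as ``the real work'', you have not produced a proof of the statement.

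Concerning the two obstructions you name, one is real and one is a dead end. The real one: since $\mathsf{y}-V'(\mathsf{x}) = -\tfrac{\theta}{\alpha c}+\mathcal{O}(\theta^{2})$ while $\dd\mathsf{x}$ has a double pole at $\theta=0$, the form $\big(\mathsf{y}-V'(\mathsf{x})\big)\dd\mathsf{x}$ has a simple pole at $\theta=0$ with residue $1/\alpha$, so the substitution $\mathsf{y}\dd\mathsf{x}\mapsto\dd V(\mathsf{x})$ is exact only up to an extra term $-\check{\Phi}_{g,1}(0)/\alpha$. In Lemma~\ref{lemFgs} this term dies because strict antiinvariance, $\Phi_{g,1}(\theta^{-1})=-\Phi_{g,1}(\theta)$ obtained from \eqref{omeg1line}, forces $\Phi_{g,1}(0)=-\Phi_{g,1}(\infty)=0$; here, as you correctly observe, no such symmetry is available, and the paper's proof passes over this point in silence, so a fully rigorous argument does owe either a proof that $\check{\Phi}_{g,1}(0)=\int_{\infty}^{0}\check{\omega}_{g,1}$ vanishes or the inclusion of this extra term. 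The dead end: the hoped-for cancellation of $t_3,\ldots,t_{r+1}$. Evaluating $\Res_{\theta=0}V(\mathsf{x})\check{\omega}_{g,1}$ gives $-\tfrac{1}{2}{\rm Rest}_{g,(2)}+\sum_{k=3}^{r+1}t_k\tfrac{{\rm Rest}_{g,(k)}}{k}$, with the $t_k$ present, exactly as in the paper's own penultimate display and exactly parallel to Lemma~\ref{lemFgs}; the $t$-free right-hand side printed in \eqref{idngun} is a slip in the last line of the paper's computation, not a cancellation to be established. Organising your proof around producing that cancellation would have led nowhere: no rearrangement of the correction terms removes the weights $t_k$ from that residue.
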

\begin{proof} Writing $\mathsf{x}\dd \mathsf{y} = -\mathsf{y}\dd \mathsf{x} + \dd(\mathsf{x}\mathsf{y})$ and with the help of \eqref{xypr}, we compute:
\begin{equation}
\label{Fgautre}\begin{split}
(2 - 2g)\mathfrak{F}_g[\check{\mathcal{S}}] & = \sum_{\rho \in \check{P}} \Res_{\theta = \rho} \bigg(\int_{o_{\rho}}^{\theta} \mathsf{x}\dd \mathsf{y}\bigg) \check{\omega}_{g,1}(\theta) = -\sum_{\rho \in \check{P}} \Res_{\theta = \rho} \bigg(\int_{o_{\rho}}^{\theta} \mathsf{y}\dd \mathsf{x}\bigg) \check{\omega}_{g,1}(\theta)  \\
& = \sum_{\rho \in \check{P}} \Res_{\theta = \rho} \check{\Phi}_{g,1}(\theta)\,\mathsf{y}(\theta)\dd\mathsf{x}(\theta) = - \Res_{\theta = 0,\infty} \check{\Phi}_{g,1}(\theta)\,\mathsf{y}(\theta)\dd\mathsf{x}(\theta).
\end{split}
\end{equation}
When $\theta \rightarrow \infty$, we have
$$
\mathsf{y}(\theta)\dd\mathsf{x}(\theta) = \mathcal{O}(\theta^{-1} \dd \theta),\qquad \check{\Phi}_{g,1}(\theta) = \mathcal{O}(\theta^{-1}),
$$
therefore $\theta = \infty$ does not contribute to the residue, and we find:
\begin{equation}
\begin{split}
(2 - 2g)\mathfrak{F}_g[\check{\mathcal{S}}] & = - \Res_{\theta = 0} \check{\Phi}_{g,1}(\theta)\,\dd V(\mathsf{x}(\theta)) \\
& =  \Res_{\theta = 0} V(\mathsf{x}(\theta))\,\check{\omega}_{g,1}(\theta) \\
& = \Res_{x = \infty} \bigg(\frac{x^2}{2} - \sum_{m = 3}^{r + 1} t_m\,\frac{x^m}{m}\bigg) \bigg(\sum_{k = 1}^{r + 1} {\rm Rest}_{g;(k)}\,\frac{\dd x}{x^{k + 1}}\bigg) \\
& = - \frac{{\rm Rest}_{g,(2)}}{2} + \sum_{k = 3}^{r + 1} \frac{{\rm Rest}_{g,(k)}}{k}.
\end{split}
\end{equation}
\end{proof}

\subsubsection{Commentary}

Lemma~\ref{lemFgs} related the enumeration of closed maps of genus $g$ to the enumeration of ordinary maps of genus $g$ with $1$ boundary face. One may try to apply a simplification procedure to this boundary face, so as to relate it further to a fully simple enumeration. However, in doing so, many topologies lower than $(g,1)$ may appear. To understand if symplectic invariance is true (or true up to additional terms), we would need to find a combinatorial interpretation of the generating series ${\rm Rest}_{g,(k)}$, stored in the $\theta \rightarrow 0$ series expansion of $\check{\omega}_{g,1}$. The fully simple enumeration itself is stored in the expansion at $\theta = \infty$. The particular form it takes in \eqref{expnunu} has a clear combinatorial meaning: one can attach  ordinary disks at each vertex of a simple boundary face to make it ordinary. However, an ordinary face can be obtained from a simple face in different ways as well, that would involve maps of lower topologies. Therefore, the combinatorial meaning of \eqref{idngun} is at present not elucidated although we expect there should be one.

\subsubsection{Relation to matrix model with external field}

Theorem~\ref{maincor} had received a conditional proof in \cite{BG-F18}, provided a milder version of symplectic invariance was true for the topological recursion for the matrix model with external field, from a combinatorial interpretation of the partition function of the matrix model with external field as a generating series of fully simple maps.
The definition of ciliated maps from \cite{BCEG-F} was also motivated by the matrix model with external field, seen as a generalisation of the Kontsevich matrix model, in order to study the so-called $r$-spin intersection numbers \cite{Witt93,FSZ10} on the moduli space of curves, as a generalisation of the study initiated by Kontsevich for $r=2$ \cite{Kon92} in which he gave a proof of Witten's conjecture \cite{Witt90}. This suggests that the concrete combinatorial tools developed around fully simple maps in relation to ordinary maps can be employed within the full generality of the matrix model with external field (without specifying the parameters and variables to $0$), which involves an even larger family of spectral curves.

\subsection{Enumeration of fully simple maps}

The enumeration of fully simple maps of genus $0$ was explicitly solved by Krikun \cite{Krikun} for triangulations (only $t_3\neq 0$). His method was later generalised by Bernardi--Fusy for planar quadrangulations (only $t_4\neq 0$) and boundary faces of even degrees. Using the predictions coming from the conjectural topological recursion those formulas were conjecturally generalised for any boundary face degrees \cite[Conjecture 1.9]{BG-F18}. That closed formula is now proved for $n\leq 4$ from a straightforward application of two steps of the topological recursion. The enumeration of discs and cylinders for any structure of internal faces was already established in \cite{BG-F18}.

\medskip

In general, possibly disconnected ordinary maps are known to be related to possibly disconnected fully simple maps via monotone Hurwitz numbers. This relation was established using Weingarten calculus in \cite{BG-F18} and via bijective combinatorics in \cite{BCDG-F19}. These formulas allow to compute the number of fully simple maps with certain constraints, if one is already able to compute the number of ordinary maps and (strictly or weakly) monotone Hurwitz numbers. The enumeration of connected maps in terms of the disconnected ones is possible making use of inclusion-exclusion formulas. The advantage of Theorem~\ref{maincor} is to solve directly the enumeration of fully simple maps for any topology, recursively on $2g-2+n$, and for any structure of the internal faces. For instance, it implies the enumeration of quadrangulations of topology $(1,1)$:
\begin{cor}
For $m\in\mathbb{Z}_{\geq 0}$, let $\phi_m=c^{2m}\frac{1+(m-1)\sqrt{1-12t_4}}{1-12t_4}$, where we $c^2= \frac{1 - \sqrt{1 - 12t_4}}{6t_4}.$
Then, 
\begin{align}\label{formulasSeries1}
\left.\mathrm{Map}_{1;(2(m+1))}\right|_{\alpha=1} & = \frac{(2m+1)!}{6\,m!^2}\,\phi_m, \ \text{ for } m\geq 0,\\
\left.\mathrm{FSMap}_{1;(2m)}\right|_{\alpha=1} & = \frac{(3m)!\, t_4^{m+1}}{4\,m!(2m-1)!}\,\phi_{3m+1}, \ \text{ for } m\geq 1.
\end{align}
\hfill $\star$
\end{cor}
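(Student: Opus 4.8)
The plan is to specialise the two spectral curves of Theorem~\ref{maincor} to quadrangulations and run a single step of topological recursion on each, since the topology $(1,1)$ is reached after $2g-2+n=1$ step. First I would set $r=3$, $t_3=0$ and keep only $t_4\neq 0$. Because $V$ is then even, the $u\mapsto -u$ symmetry forces $a=0$ in \eqref{spnormal}, so the ordinary curve reads $\mathsf{x}(\theta)=c(\theta+\theta^{-1})$ and $\mathsf{y}(\theta)=\big[V'(\mathsf{x}(\theta))\big]_{\leq 0}=(c-3t_4c^3)\theta^{-1}-t_4c^3\theta^{-3}$. Imposing the first condition of \eqref{xycond} (equivalently \eqref{cQ}) with $\alpha=1$ gives $c^2-3t_4c^4=1$, whose branch with $c^2\to 1$ as $t_4\to 0$ is exactly $c^2=\frac{1-\sqrt{1-12t_4}}{6t_4}$. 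I would record the two algebraic identities $3t_4c^4=c^2-1$ and $\sqrt{1-12t_4}=1-6t_4c^2$, which form the dictionary used at the end to recognise $\phi_m$.

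On the ordinary side, the ramification points are the simple zeroes $\theta=\pm1$ of $\dd\mathsf{x}$, and the fibre involution $\theta\mapsto\theta^{-1}$ is global, so \eqref{gammaTR1} produces $\omega_{1,1}$ as an explicit rational $1$-form after taking the two residues. Expanding it at $\theta\to\infty$ as in the first line of \eqref{expnunu} and reading off the coefficient of $\dd\mathsf{x}/\mathsf{x}^{2m+3}$ gives $\mathrm{Map}_{1;(2(m+1))}$. The only genuinely infinite part of the computation is the expansion of $\mathsf{x}^{-(2m+3)}=\big(c(\theta+\theta^{-1})\big)^{-(2m+3)}$ around $\theta=\infty$, whose relevant coefficient is the central binomial $\binom{2m}{m}$; this is what generates the prefactor $\tfrac{(2m+1)!}{6\,m!^2}=\tfrac{(2m+1)\binom{2m}{m}}{6}$ and, together with the dictionary above, the factor $\phi_m$.

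On the fully simple side I would run the same step on $\check{\mathcal{S}}$, whose $x$-function is $\mathsf{y}$; here the relevant ramification points are the two simple zeroes $\theta=\pm c^{2}\sqrt{3t_4}$ of $\dd\mathsf{y}$ (the pole of $\mathsf{y}$ at $\theta=0$ is a pole, not a zero, of $\dd\mathsf{y}$, hence contributes no residue). Taking the two residues in \eqref{gammaTR1}, now with the local $\mathsf{y}$-involution, yields $\check\omega_{1,1}=\hat\chi_{1,1}$ explicitly. By \eqref{TRfull}--\eqref{expnunu}, $\mathrm{FSMap}_{1;(2m)}|_{\alpha=1}$ is the coefficient of $\mathsf{y}^{2m-1}\dd\mathsf{y}$ in $\check\omega_{1,1}$ near $\theta\to\infty$. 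Since $\mathsf{y}=\tfrac1c\theta^{-1}(1-t_4c^4\theta^{-2})$ is cubic in $\theta^{-1}$, converting the $\theta$-expansion of $\check\omega_{1,1}$ into a $\mathsf{y}$-expansion is a Lagrange inversion with respect to this cubic relation; the resulting trebling of the index is precisely the mechanism producing $\phi_{3m+1}$ (and the power $t_4^{m+1}$) in the stated formula.

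The hardest part is this last step: the $\mathsf{y}$-side involution and ramification points are not as transparent as $\theta\mapsto\theta^{-1}$, so both the residue evaluation giving $\check\omega_{1,1}$ and the Lagrange-inversion bookkeeping that reindexes $m\mapsto 3m+1$ require care, and one must check that the constant shift in the numerator $1+(m-1)\sqrt{1-12t_4}$ of $\phi$ survives the inversion. Once $\check\omega_{1,1}$ is in hand, everything reduces to identifying rational functions of $c$ and $t_4$ through the dictionary $3t_4c^4=c^2-1$, $\sqrt{1-12t_4}=1-6t_4c^2$, after which both closed formulas in the corollary fall out. As a consistency check I would verify the base cases $m=0$ (giving $\mathrm{Map}_{1;(2)}|_{\alpha=1}=\tfrac{t_4c^2}{1-12t_4}$) and $m=1$ against a direct enumeration or against a second step of the recursion.
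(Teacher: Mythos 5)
Your plan is correct and is essentially the paper's own route: the paper proves this corollary by invoking Theorem~\ref{maincor} and then performing exactly the computation you outline --- one step of topological recursion on each of the two quadrangulation spectral curves \eqref{spnormal} and \eqref{fullysimplesp}, followed by coefficient extraction in $\mathsf{x}^{-1}$ (resp.\ $\mathsf{y}$) --- with the detailed bookkeeping deferred to \cite[Section 5.2.3]{BG-F18}. Your intermediate identifications (the branch $c^2 = \frac{1-\sqrt{1-12t_4}}{6t_4}$, the ramification points $\theta = \pm 1$ and $\theta = \pm c^2\sqrt{3t_4}$, the extraction of $\mathrm{FSMap}_{1;(2m)}$ from the $\theta\to\infty$ expansion in $\mathsf{y}$, and the $m=0$ consistency check) all agree with the paper's conventions.
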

The details of the proof and how to extract closed formulas from this corollary are detailed in \cite[Section 5.2.3]{BG-F18}.

\subsection{Functional relations and connection to free probability}

In free probability theory, the notion of independence of classical probabilities is replaced by a notion of freedom, which is particularly well adapted to study non-commutative probability spaces. Free cumulants are crucial objects that allow to characterise freedom in a simple way. Random matrices in the large size limit constitute an important class of free random variables. In \cite{MingoSpeicher, MMS07, speicher2007} a notion of higher order freeness was introduced to study these questions more finely. While first order free cumulants are defined in terms of moments using non-crossing partitions, the definition of higher order free cumulants involve intricate combinatorial objects, called non-crossing partitioned permutations.

\medskip

For $n=1$, the $R$-transform machinery \cite{Voiculescu2} gives a relation between the generating series of moments and of free cumulants, by functional inversion. For $n=2$, a functional relation between the ordinary and the free generating series was also found \cite{speicher2007}, already in a quite complicated way. Similar functional relations are unfortunately not known for $n\geq 3$, which leaves us with a rather complicated theory to compute with. In \cite[Section 11.2]{BG-F18}, for an arbitrary (formal) unitarily-invariant measure of the space of Hermitian matrices, the classical identification of moments of products of traces of Hermitian matrices with generating series of ordinary maps \cite{BIPZ} was extended to an identification of free cumulants for the same measure with the generating series of planar fully simple maps. The formulas for discs and cylinders~\eqref{01rel}-\eqref{02rel} recover the $R$-transform machinery for $n=1,2$. In particular, the formula for cylinders gives an intrinsic, geometric meaning to the functional relation for $n=2$.

\medskip

The present work allows to recursively compute higher order free cumulants in certain unitary invariant matrix models, namely for measures of the form $\mathcal{Z}_N^{-1}\dd M\,e^{-N{\rm Tr}\,V(M)}$. We expect that this approach could \textit{in fine} extend the $R$-transform machinery for any order $n$ for those measures. For instance, for $n=3$, we have already established the desired functional relation between ordinary and fully simple pairs of pants, that is of topology $(0,3)$:
\begin{cor} Let $\omega_{0,2}(\theta_1,\theta_2)=\chi_{0,2}(\theta_1,\theta_2)$ be the standard bidifferential and set $\alpha=1$. Then, we have the following relation of ordinary and fully simple pairs of pants:
\begin{align}\label{03formula}
& \omega_{0,3}(\theta_1,\theta_2,\theta_3) + \chi_{0,3}(\theta_1,\theta_2,\theta_3) = \Res_{z = \theta_1,\theta_2,\theta_3} \frac{\omega_{0,2}(\theta,\theta_1)\omega_{0,2}(\theta,\theta_2)\omega_{0,2}(\theta,\theta_3)}{\dd \mathsf{x}(\theta)\dd \mathsf{y}(\theta)}  \\
& =  \dd_{1}\Big[\frac{\omega_{0,2}(\theta_1,\theta_2)\omega_{0,2}(\theta_1,\theta_3)}{\dd \mathsf{x}(\theta_1)\dd \mathsf{y}(\theta_1)}\Big] + \dd_{2}\Big[\frac{\omega_{0,2}(\theta_2,\theta_1)\omega_{0,2}(\theta_2,\theta_3)}{\dd \mathsf{x}(\theta_2)\dd \mathsf{y}(\theta_2)}\Big] + \dd_3\Big[\frac{\omega_{0,2}(\theta_3,\theta_1)\omega_{0,2}(\theta_3,\theta_2)}{\dd \mathsf{x}(\theta_3)\dd \mathsf{y}(\theta_3)}\Big]. \nonumber
\end{align}
\hfill $\star$
\end{cor}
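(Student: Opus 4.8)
The plan is to identify both three-point differentials as outputs of the topological recursion and then combine them through the Eynard--Orantin formula for the genus-zero three-point function together with the residue theorem on $\PP^1$. By Theorem~\ref{maincor}, $\omega_{0,3}$ is produced by topological recursion on the ordinary curve $\mathcal{S}$ of \eqref{spnormal} (whose ramification points $P$ are the zeroes of $\dd\mathsf{x}$), while $\chi_{0,3}$ is produced by the topological recursion on the fully simple curve $\check{\mathcal{S}}$ of \eqref{fullysimplesp} (whose ramification points $\check P$ are the zeroes of $\dd\mathsf{y}$). For any spectral curve with simple ramification the genus-zero three-point differential has the closed form $\sum_{\rho}\Res_{\theta=\rho}\frac{\omega_{0,2}(\theta,\theta_1)\omega_{0,2}(\theta,\theta_2)\omega_{0,2}(\theta,\theta_3)}{\dd(\cdot)\,\dd(\cdot)}$, the sum running over ramification points and the denominator being the product of the differentials of the two functions of the curve. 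The key point is that the integrand $\Omega(\theta)\coloneqq\frac{\omega_{0,2}(\theta,\theta_1)\omega_{0,2}(\theta,\theta_2)\omega_{0,2}(\theta,\theta_3)}{\dd\mathsf{x}(\theta)\,\dd\mathsf{y}(\theta)}$ is \emph{the same} for $\mathcal{S}$ and $\check{\mathcal{S}}$, since swapping $\mathsf{x}$ and $\mathsf{y}$ leaves $\dd\mathsf{x}\,\dd\mathsf{y}$ invariant and $\omega_{0,2}=\chi_{0,2}$ is the common standard bidifferential. Thus $\omega_{0,3}=\sum_{\rho\in P}\Res_{\theta=\rho}\Omega$ and $\chi_{0,3}=\sum_{\rho\in\check P}\Res_{\theta=\rho}\Omega$. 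As in the free-energy lemmas, I would first reduce to generic $t_3,\ldots,t_{r+1}$, so that both $\dd\mathsf{x}$ and $\dd\mathsf{y}$ have only simple zeroes and the three-point formula applies, and recover the general statement by continuity.

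Summing the two identities and applying the residue theorem on $\PP^1$ in the coordinate $\theta$ is the heart of the argument: the total of the residues of the meromorphic one-form $\Omega$ vanishes, and its poles occur exactly at $P\cup\check P$ (from the vanishing of $\dd\mathsf{x}$ and $\dd\mathsf{y}$) and at $\theta_1,\theta_2,\theta_3$ (from the double poles of the factors $\omega_{0,2}(\theta,\theta_i)$). One must check that the two remaining distinguished points $\theta=0$ and $\theta=\infty$ --- the poles of $\mathsf{x}$ and $\mathsf{y}$ --- carry no residue. Using $\mathsf{x}(\theta)=a+c(\theta+\theta^{-1})$ and $\mathsf{y}(\theta)=\bigl[V'(\mathsf{x}(\theta))\bigr]_{\leq 0}$ one sees that $\dd\mathsf{x}$ has a double pole at each of $0,\infty$, that $\dd\mathsf{y}$ has a pole of order $r+1$ at $0$ and is holomorphic and non-vanishing at $\infty$, while the numerator of $\Omega$ stays holomorphic there; hence $\Omega$ has a zero of order at least two at both $0$ and $\infty$ and contributes nothing. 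Consequently the residues at $P\cup\check P$ are balanced against those at $\theta_1,\theta_2,\theta_3$, which yields the first equality of the statement, $\omega_{0,3}+\chi_{0,3}=\Res_{\theta=\theta_1,\theta_2,\theta_3}\Omega$.

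It then remains to evaluate the residue of $\Omega$ at each $\theta_i$. Near $\theta=\theta_i$ the only singular factor is $\omega_{0,2}(\theta,\theta_i)=\frac{\dd\theta\,\dd\theta_i}{(\theta-\theta_i)^2}$, a double pole, so the residue is a first derivative: collecting the regular remainder as $\tilde h(\theta)\,\dd\theta_j\,\dd\theta_k$ with $\tilde h(\theta)=\bigl[(\theta-\theta_j)^2(\theta-\theta_k)^2\,\mathsf{x}'(\theta)\,\mathsf{y}'(\theta)\bigr]^{-1}$, one finds $\Res_{\theta=\theta_i}\Omega=\dd_i\bigl[\tilde h(\theta_i)\,\dd\theta_j\,\dd\theta_k\bigr]=\dd_i\bigl[\frac{\omega_{0,2}(\theta_i,\theta_j)\,\omega_{0,2}(\theta_i,\theta_k)}{\dd\mathsf{x}(\theta_i)\,\dd\mathsf{y}(\theta_i)}\bigr]$. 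Summing over $i\in\{1,2,3\}$ produces the three symmetric total-derivative terms on the right-hand side, giving the second equality and completing the identification.

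The main obstacle I anticipate is not conceptual but a matter of careful bookkeeping in the residue-theorem step. One has to pin down the exact pole orders of $\dd\mathsf{x}$ and $\dd\mathsf{y}$ at $\theta=0$ and $\theta=\infty$ to guarantee that these points contribute no residue, and --- above all --- to track the orientation and sign conventions of the three-point formula and of the residues so that the combination of $\omega_{0,3}$ and $\chi_{0,3}$ comes out with exactly the sign asserted. Everything else --- the Eynard--Orantin formula, the $\mathsf{x}\leftrightarrow\mathsf{y}$ symmetry of $\Omega$, and the evaluation of the double-pole residues --- is structurally routine once the two spectral curves have been identified as in Theorem~\ref{maincor}.
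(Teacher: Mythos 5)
Your strategy is exactly the one the paper relies on: the paper's own ``proof'' is a citation of Theorem~\ref{maincor} together with \cite[Section 6]{BG-F18}, and the argument there is precisely yours --- write both three-point functions via the genus-zero closed formula, observe that the integrand $\Omega(\theta)=\frac{\omega_{0,2}(\theta,\theta_1)\omega_{0,2}(\theta,\theta_2)\omega_{0,2}(\theta,\theta_3)}{\dd\mathsf{x}(\theta)\dd\mathsf{y}(\theta)}$ is invariant under $\mathsf{x}\leftrightarrow\mathsf{y}$, apply the residue theorem on $\mathbb{P}^1$, check that $\theta=0,\infty$ carry no residue, and convert the double-pole residues at the $\theta_i$ into total derivatives. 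Your pole bookkeeping at $0$ and $\infty$ is correct (a zero of order $r+3$ at $\theta=0$ and a double zero at $\theta=\infty$), as is your evaluation $\Res_{\theta=\theta_i}\Omega=\dd_i\big[\frac{\omega_{0,2}(\theta_i,\theta_j)\omega_{0,2}(\theta_i,\theta_k)}{\dd\mathsf{x}(\theta_i)\dd\mathsf{y}(\theta_i)}\big]$.

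However, the sign issue you flag at the end is not incidental bookkeeping: as written, your chain of equalities does not close. If the three-point formula is taken with the $+$ sign you state, $\omega_{0,3}=\sum_{\rho\in P}\Res_{\theta=\rho}\Omega$ and $\chi_{0,3}=\sum_{\rho\in\check{P}}\Res_{\theta=\rho}\Omega$, then the residue theorem forces $\omega_{0,3}+\chi_{0,3}=-\sum_{i}\Res_{\theta=\theta_i}\Omega=-\sum_i\dd_i[\cdots]$, i.e.\ you prove the corollary with an overall minus sign on the right-hand side, contradicting the (unambiguous) positive sign in your own residue evaluation at $\theta_i$. The resolution is that in this paper's conventions --- kernel $K_{\rho_k}(\zeta_1,\zeta)=\frac{1}{2}\int_{\zeta^{(k)}}^{\zeta}\Gamma_{0,2}(\zeta_1,\cdot)\,\big/\,\big(\Gamma_{0,1}(\zeta)-\Gamma_{0,1}(\zeta^{(k)})\big)$ with $\Gamma_{0,1}=y\,\dd x$, as in \eqref{gammaTR1} --- the genus-zero closed formula carries a minus sign:
\begin{equation*}
\omega_{0,3}(\theta_1,\theta_2,\theta_3)=-\sum_{\rho\in P}\Res_{\theta=\rho}\,\frac{\omega_{0,2}(\theta,\theta_1)\omega_{0,2}(\theta,\theta_2)\omega_{0,2}(\theta,\theta_3)}{\dd\mathsf{x}(\theta)\,\dd\mathsf{y}(\theta)}\,,
\end{equation*}
and likewise for $\chi_{0,3}$ on $\check{\mathcal{S}}$. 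One can check this on the Airy curve $x(\zeta)=\zeta^2/2$, $y(\zeta)=\zeta$: the recursion with this kernel yields $\omega_{0,3}=-\prod_{i}\dd\zeta_i/\zeta_i^2$, whereas the unsigned residue sum yields $+\prod_i\dd\zeta_i/\zeta_i^2$. With the minus sign inserted on both curves, the residue theorem gives $\omega_{0,3}+\chi_{0,3}=+\sum_i\Res_{\theta=\theta_i}\Omega$ and your argument delivers exactly the two stated equalities. So the route is right and matches the paper's; the one step you left open is precisely the step on which the asserted signs depend, and it must be settled by fixing the sign of the closed formula in these conventions rather than by citing it in its more commonly quoted (positive-sign) form.
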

This corollary follows from Theorem~\ref{maincor} and the details exposed in \cite[Section 6]{BG-F18}. Even if free cumulants are so far only defined for $g=0$, our work suggests that there should exist a universal theory of approximate higher order free cumulants taking into account higher genus corrections. 
For a compact introduction to all the necessary objects to understand this connection to free probability precisely, the reader could consult \cite[Section 1.6]{GF18} or many other more extended sources written by experts in free probability \cite{NicaSpeicher,MingoSpeicherBook}.

\medskip

For general (formal) unitarily-invariant measures, the underlying combinatorial objects (in ordinary or fully simple flavor) are the stuffed maps introduced in \cite{Bor14}.  It was proved that stuffed maps satisfy a generalisation of the topological recursion, called blobbed topological recursion \cite{BSblob}, where the initial data of the spectral curve is enriched by symmetric holomorphic forms in $n$ variables $(\phi_{g,n})_{2g-2+n>0}$. In \cite{BG-F18} it was conjectured that after the same symplectic exchange transformation, and a transformation of the blobs still to be described, blobbed topological recursion will enumerate fully simple stuffed maps. This conjecture already follows for the base topologies $(0,1)$ and $(0,2)$ from the formulas~\eqref{01rel}-\eqref{02rel} for discs and cylinders, since the base topologies are not altered by the blobs. It may be possible, either by studying multi-ciliated stuffed maps, or by substitution methods (at least in genus $0$), to extend the results of the present article to the case of stuffed maps. The solution of this problem would allow the compute higher order free cumulants in the full generality of \cite{speicher2007}, and progressing towards such a solution is an important motivation of the present work.

\bibliographystyle{plain}
\bibliography{Biblicombi}

\end{document}